\theoremstyle{plain}
\newtheorem{theorem}{Theorem}[section]
\newtheorem{lemma}[theorem]{Lemma}
\newtheorem{proposition}[theorem]{Proposition}
\theoremstyle{remark}
\newtheorem{definition}{Definition}
\newtheorem{remark}[theorem]{Remark}
\newtheorem{example}[theorem]{Example}
\title{ON ZARISKI'S MULTIPLICITY CONJECTURE}
\author{MAHDI TEYMURI GARAKANI}
\address{IMPA - Instituto de Matem\'atica Pura e Aplicada,
Estrada Dona Castorina 110, Jardim Botanico 22460-320 Rio de Janeiro- RJ Brazil}
\email{teymuri@impa.br}
\keywords{Multiplicity, topological type, complex hypersurface,
singular point}
\subjclass[2000]{Primary 32S15, 32S25, 32S50, Secondary 58K15,
32S65}
\begin{document}

\maketitle

\section{Introduction}
Since the early 1960's, O. Zariski developed a comprehensive theory of
equisingularity in codimension one. He initiated an equisingularity program
with topological, differential geometrical and purely algebraical point
of view
and proposed a problem list in [22] as an extraction of many possible
conjectures
in singularity theory [23]. In this part we will be concerned with
topological aspects of this program and more specifically with the so-called
Zariski's multiplicity conjecture.
We first recall some definitions.
Let $ f , g: (\mathbb{C}^{n},0) \rightarrow (\mathbb{C},0)$ be two germs of
holomorphic functions and $V_{f}$ and $V_{g}$ be two germs at the
origin of the hypersurfaces
defined by $f^{-1}(0)$ and $g^{-1}(0)$ respectively.
We suppose $0 \in \mathbb{C}^{n}$ is an isolated singularity of the functions.
The \textit{algebraic multiplicity} $m_f$ of the germs of $V_{f}$ or
$f$ is the
order of vanishing of function $f$ at $0 \in \mathbb{C}^{n}$ or
equivalently is
the order of the first nonzero leading term in the Taylor expansion of $f$
\begin{eqnarray}
f=f_\nu + f_{\nu + 1} + \cdots \nonumber
\end{eqnarray}
where $f_i$ is homogeneous polynomial of degree $i$.

\begin{definition} We say $V_{f}$ and $V_{g}$
are
\textit{topologically equisingular} or topologically V-equivalent if there
is a germ of homeomorphism $\phi : (\mathbb{C}^{n},0) \rightarrow
(\mathbb{C}^{n},0)$ sending $V_{f}$ onto $V_{g}$. More precisely, there
are neighborhoods $U$ and $U'$ of $0 \in \mathbb{C}^{n}$  such that $f$
and $g$
are defined and a homeomorphism
$\phi : U \rightarrow U'$ such that $\phi (f^{-1}(0) \cap U)= g^{-1}(0)
\cap U'$
and $\phi(0)=0$.
\end{definition}

\noindent
\textbf{Zariski conjecture.} \textit{Topological equisingularity of germs of
hypersurfaces implies equimultiplicity.}\\

A well known result by Burau [4] and Zariski [23] states an affirmative
answer in
the case of curves ($n=2$). In higher dimension the conjecture is still open
despite more than three decades effort to prove it.

Here we discuss some features of the problem, especially the relations of
the work of A'Campo on the zeta function of a monodromy and the
Zariski's multiplicity conjecture. Also some previous results are
sharpened; the results of [6] and [18] in theorem (3.2) and the one in
[7] in theorem (5.2). In an analogy with hypersurfaces, J.F. Mattei
asked the same question about multiplicity for holomorphic foliations.
In section (6) we recall some remarkable results for foliations.

\textbf{Acknowledgment.} This work was partially done during a visit of
the author to the Max Planck Institute. The author would like to
express his deepest gratitude to Professor Matilde Marcolli who made
this visit possible. Also I would like to thank the referee for useful
comments.

\section{Preliminaries}
In [15], Milnor has opened a beautiful account on the complex
hypersurfaces. The main achievement of it, is the Milnor fibration
which we mention here. Also we recall briefly some generalities about
complex hypersurfaces.

Let $f:U \subset \mathbb{C}^{n} \rightarrow \mathbb{C}$ be a holomorphic
function on an open neighborhood of $0$ in $ \mathbb{C}^{n}$ and $f(0)=0$. We
denote
$D_{\epsilon}=\lbrace z \vert z \in \mathbb{C}^{n} : \Vert z \Vert \leq
\epsilon
\rbrace$, $ S_{\epsilon}= \partial D_{\epsilon}$, $H_0=\lbrace z \in
\mathbb{C}^{n} \vert f(z)=0 \rbrace$
and $d_zf=(\frac{{\partial}f}{{\partial}z_1}(z), \cdots ,
\frac{{\partial}f}{{\partial}z_n}(z))$.

We say the origin is an isolated singularity of $f$ if $d_0f=0$ and $d_zf \neq
0$ for
a neighborhood of $0 \in \mathbb{C}^{n}$ except $0$.

Let $\mathcal{O}_n$ be the ring of germs of holomorphic functions defined in
some neighborhood of $0 \in \mathbb{C}^{n}$ and let
$<\frac{{\partial}f}{{\partial}z_1}, \cdots ,
\frac{{\partial}f}{{\partial}z_n}>$ be the ideal generated by the germs at $0
\in \mathbb{C}^{n}$ of derivative components of $f$. We define \textit{Milnor
number} $\mu $ of the holomorphic function $f$ at
$0 \in \mathbb{C}^{n}$ as
\begin{eqnarray}
\mu = dim_{\mathbb{C}}{\mathcal{O}_n}/{<\frac{{\partial}f}{{\partial}z_1},
\cdots , \frac{{\partial}f}{{\partial}z_n}>} \nonumber
\end{eqnarray}
This number is finite and nonzero if and only if $0 \in \mathbb{C}^{n}$ is an
isolated singularity of $f$, a hypothesis which we will assume from now on. In
this case $\mu $ coincides with the topological degree of the Gauss mapping
induced by $d_zf $ on $ S_{\epsilon} $ for $\epsilon $ small enough. The
following lemma is useful to deal with the Milnor number.
\begin{lemma}
Let $0 < \mu < \infty$. Given $ \epsilon > 0 $ there exists $ \delta > 0 $ such
that for any $c \in \mathbb{C}^{n}$ with $\Vert c \Vert < \delta $ the
number of solutions of the equation $d_zf=c$ in the ball $ D_{\epsilon}
$ is at
most $\mu $. Moreover, if $ p_1, \cdots , p_m $, $m \leqslant \mu $, are such
solutions, then
$\sum_{i=1}^{m} \mu (f-\sum_{i=1}^{n}z_ic_i, p_i)= \mu.$
\end{lemma}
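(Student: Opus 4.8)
The plan is to recognize the entire statement as an assertion about the local degree (multiplicity) of the gradient map $d_z f \colon (\mathbb{C}^{n},0) \to (\mathbb{C}^{n},0)$. First I would set $f_c = f - \sum_{i=1}^{n} z_i c_i$ and observe that $d_z f_c = d_z f - c$, so the solutions of $d_z f = c$ are exactly the critical points of $f_c$. Localizing at such a point $p_i$, the ideal generated by the components of $d_z f_c$ equals $\langle \partial f/\partial z_1 - c_1, \dots, \partial f/\partial z_n - c_n\rangle$, so the summand $\mu(f_c,p_i)$ is precisely the local intersection multiplicity of $d_z f$ over the value $c$ at $p_i$. Thus both conclusions are statements about how the multiplicity of the finite map germ $d_z f$, whose value at $0$ is $\mu$, distributes among the nearby fibres.

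Next I would fix the geometry. Since $0$ is an isolated singularity, the germs $\partial f/\partial z_j$ vanish simultaneously only at $0$, so $d_z f$ is a finite map germ and $\mu = \dim_{\mathbb{C}} \mathcal{O}_n/\langle \partial f/\partial z_j\rangle$ is its local degree. I would shrink $\epsilon$ so that $\overline{D_\epsilon}$ meets $(d_z f)^{-1}(0)$ only in $0$ and $d_z f$ is defined on a neighborhood of $\overline{D_\epsilon}$. The continuous function $z \mapsto \Vert d_z f\Vert$ is then strictly positive on the compact sphere $S_\epsilon$, so it attains a minimum; I would take $\delta$ to be that minimum (or anything smaller). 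With this choice, for every $c$ with $\Vert c\Vert < \delta$ the equation $d_z f = c$ has no solution on $S_\epsilon$, and by properness of the finite germ its solution set $\{p_1,\dots,p_m\}$ inside $D_\epsilon$ is finite and contained in the open ball.

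To obtain the numerical identity I would pass to the topological degree. As recalled just before the lemma, $\mu$ is the degree of the Gauss map $d_z f/\Vert d_z f\Vert \colon S_\epsilon \to S^{2n-1}$. For $\Vert c\Vert < \delta$ the straight-line homotopy $d_z f - t c$, $t \in [0,1]$, never vanishes on $S_\epsilon$ since $\Vert d_z f\Vert \geq \delta > \Vert t c\Vert$ there; hence $(d_z f - c)/\Vert d_z f - c\Vert$ is homotopic to the Gauss map and also has degree $\mu$. By additivity of the degree over the fibre, this degree is the sum of the local degrees of $d_z f - c$ at the $p_i$, and for a holomorphic germ each local degree equals the algebraic local multiplicity $\dim_{\mathbb{C}} \mathcal{O}_{n,p_i}/\langle \partial f/\partial z_j - c_j\rangle = \mu(f_c,p_i)$. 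This yields $\sum_{i=1}^{m} \mu(f_c,p_i) = \mu$. Finally, each $p_i$ is a singular point of $f_c$, so $\mu(f_c,p_i)\geq 1$, whence $m \leq \sum_i \mu(f_c,p_i) = \mu$, giving the bound on the number of solutions.

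The step I expect to be the main obstacle is the identification of the local topological degree of the holomorphic germ $d_z f - c$ at each $p_i$ with the algebraic multiplicity $\mu(f_c,p_i)$, together with the additivity of the degree over the fibre. For complex-analytic maps these local degrees are automatically positive and equal to the dimension of the corresponding local algebra, but making this precise requires the conservation-of-multiplicity machinery for finite holomorphic maps (equivalently, Weierstrass preparation realizing $d_z f$ as a finite branched cover). I would invoke this from the references on the Milnor fibration rather than reprove it.
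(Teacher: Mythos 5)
Your proof is correct. Note that the paper itself offers no proof of this lemma at all: it is recalled in the preliminaries as a standard fact about the Milnor number. Your argument is the standard one, and it is built on exactly the ingredient the paper recalls immediately before the lemma, namely that $\mu$ equals the topological degree of the Gauss map induced by $d_zf$ on $S_{\epsilon}$; the homotopy $d_zf-tc$, the additivity of the degree over the finite fibre, and the identification of each local degree with $\dim_{\mathbb{C}}\mathcal{O}_{n,p_i}/\langle \partial f/\partial z_j - c_j\rangle$ (a known theorem for finite holomorphic germs, reasonable to cite rather than reprove) together give the conservation statement, and positivity of each local multiplicity gives the bound $m\leq\mu$. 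The only caveat is the quantifier on $\epsilon$: the lemma says ``given $\epsilon>0$,'' whereas you shrink $\epsilon$; this is harmless since the statement is about germs and implicitly assumes $\epsilon$ small enough that $0$ is the only critical point of $f$ in $\overline{D_{\epsilon}}$, which is exactly the condition you impose.
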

The following theorem is called the Milnor fibration theorem:

\begin{theorem}
For $ \epsilon $ small enough the mapping $\psi_{\epsilon}: S_{\epsilon}
\setminus H_0 \rightarrow S^1$ defined by
$\psi_{\epsilon}(z)={f(z)}/{\Vert f(z) \Vert}$ is a smooth fibration which is
called the Milnor fibration. Moreover the fibers of $\psi_{\epsilon} $
have the
homotopy type of a bouquet of $\mu $ (the Milnor number of the holomorphic
function $f$ at
$0 \in \mathbb{C}^{n}$) spheres of dimension $n-1$.
\end{theorem}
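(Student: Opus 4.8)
The plan is to follow Milnor's original strategy, splitting the argument into the fibration statement and the determination of the homotopy type of the fiber. Throughout I write $\log f = \log\Vert f \Vert + i\arg f$ on $S_\epsilon \setminus H_0$, so that $\psi_\epsilon = \exp(i\arg f)$ and $\psi_\epsilon$ is a submersion at $z$ exactly when the restriction of $\arg f$ to $S_\epsilon$ has nonvanishing differential at $z$.

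First I would prove that for $\epsilon$ small enough $\psi_\epsilon$ has no critical points. Using the holomorphic gradient convention, the real gradient of $\arg f = \operatorname{Im}\log f$ is $i\,\nabla\log f$, so $z$ is a critical point of $\arg f|_{S_\epsilon}$ precisely when $i\,\nabla\log f(z)$ is a real multiple of the normal $z$, i.e. $\nabla\log f(z) = \lambda z$ with $\lambda$ purely imaginary. The heart of the matter is to exclude this for small $\Vert z \Vert$, and this gradient estimate is the principal obstacle. I would argue by contradiction: if such critical points accumulated at $0$, the curve selection lemma would furnish a real analytic arc $p(t)$ with $p(0)=0$ and $\nabla\log f(p(t)) = \lambda(t)\,p(t)$ for $t>0$; differentiating $\log f(p(t))$ along the arc and comparing real and imaginary parts with the derivative of $\log\Vert p(t)\Vert$ forces $\lambda(t)$ to have nonzero real part for small $t$, contradicting that $\lambda(t)$ is purely imaginary.

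Since $S_\epsilon \setminus H_0$ is noncompact, Ehresmann's theorem does not apply directly, so next I would pass from the submersion property to local triviality by hand. Using the nonvanishing of $d(\arg f)$ on $TS_\epsilon$ from the previous step, together with the submersivity of $\log f$ away from $H_0$, I would construct (pointwise, then globalize by a partition of unity) a smooth vector field $w$ on $S_\epsilon \setminus H_0$ that is tangent to $S_\epsilon$ and satisfies $d(\arg f)(w)\equiv 1$ and $d(\log\Vert f\Vert)(w)\equiv 0$. Because $\Vert f\Vert$ is then constant along the integral curves of $w$, they cannot approach $H_0$ in finite time, so the flow is complete; its time-$\theta$ map trivializes $\psi_\epsilon$ over any arc of $S^1$, establishing the smooth locally trivial fibration.

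Finally I would identify the homotopy type of a fiber $F$. As an open parallelizable smooth manifold of real dimension $2n-2$, $F$ has the homotopy type of a finite CW complex of dimension $\le n-1$. I would then show $F$ is $(n-2)$-connected by Morse theory, arranging a Morse function on $F$ (obtained from a small linear perturbation $\operatorname{Re}(f - \sum_i z_i c_i)$) all of whose critical points have index $\ge n-1$; combined with the dimension bound this forces $H_i(F)=0$ for $0<i<n-1$ and $H_{n-1}(F)$ free, whence by the Hurewicz theorem $F$ is homotopy equivalent to a wedge of $(n-1)$-spheres. To count the spheres I would choose $c$ generic with $\Vert c\Vert<\delta$ as in the preceding lemma: the Morsified function $f - \sum_i z_i c_i$ then acquires exactly $\mu$ nondegenerate critical points, by the additivity $\sum_i \mu(f-\sum_j z_j c_j,\,p_i)=\mu$, each contributing a single vanishing $(n-1)$-cell. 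Hence $F$ has the homotopy type of a bouquet of exactly $\mu$ spheres of dimension $n-1$.
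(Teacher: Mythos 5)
The paper itself gives no proof of this statement: it is quoted as background (Milnor's fibration theorem, reference [15]), so your proposal must be measured against Milnor's classical argument, which is exactly the strategy you are following. Steps 1 and 3 are reasonable sketches of that argument, but step 2 contains a genuine gap: the vector field you ask for need not exist. You require a tangent field $w$ on $S_\epsilon\setminus H_0$ with $d(\arg f)(w)\equiv 1$ \emph{and} $d(\log\Vert f\Vert)(w)\equiv 0$, which amounts to requiring that $d(\arg f)$ and $d(\log\Vert f\Vert)$ be non-proportional on $T_zS_\epsilon$ at every point. That is strictly stronger than the non-criticality of $\arg f$ established in step 1, and it fails in general. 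Write $\mathrm{grad}\log f(z)$ for the vector with $d(\log f)(v)=\langle v,\mathrm{grad}\log f(z)\rangle$. At a point where $\mathrm{grad}\log f(z)=\lambda z$, one computes for $v\in T_zS_\epsilon$ (so that $\langle v,z\rangle=i\sigma(v)$ with $\sigma(v)=\operatorname{Im}\langle v,z\rangle$ real) that $d(\arg f)(v)=\operatorname{Re}(\lambda)\,\sigma(v)$ and $d(\log\Vert f\Vert)(v)=\operatorname{Im}(\lambda)\,\sigma(v)$. Hence if $\lambda$ has nonzero real part \emph{and} nonzero imaginary part, the two forms are proportional with nonzero factors, $\ker d(\log\Vert f\Vert)\cap T_zS_\epsilon\subset\ker d(\arg f)$, and no vector $w$ with your two properties exists at $z$. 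Such points occur on every small sphere: for $f=z_1^2+z_1^3+z_2^2$ (isolated singularity at $0$), at $z=(z_1,0)$ with $\vert z_1\vert=\epsilon$ one finds $\lambda=(2+3\bar z_1)\big/\bigl(\epsilon^2(1+\bar z_1)\bigr)$, whose imaginary part is a nonzero multiple of $\operatorname{Im}z_1$ while $\operatorname{Re}\lambda\approx 2/\epsilon^2$. Your appeal to the ``submersivity of $\log f$ away from $H_0$'' concerns $\log f$ on the ambient space; restricted to the sphere, submersivity fails precisely at such points. Note also that step 1, as stated, only excludes purely imaginary $\lambda$, which rules out critical points of $\psi_\epsilon$ but says nothing about these mixed points.

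The repair is exactly what Milnor's Lemma 4.3 is engineered for: strengthen step 1 to the quantitative sector estimate (if $\mathrm{grad}\log f(z)=\lambda z$ with $\Vert z\Vert$ small, then $\vert\arg\lambda\vert<\pi/4$, hence $\vert\operatorname{Im}\lambda\vert<\operatorname{Re}\lambda$), and weaken step 2 to demand only $d(\arg f)(w)\equiv 1$ together with the bound $\vert d(\log\Vert f\Vert)(w)\vert<1$. The computation above shows this relaxed system is solvable at every point, and it still gives completeness of the flow: $\log\Vert f(p(t))\Vert$ varies at most linearly in $t$, so an integral curve cannot reach $H_0$ in finite time, and the time-$\theta$ maps trivialize $\psi_\epsilon$ as you intended. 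A smaller defect in step 3: the bound ``finite CW complex of dimension $\leq n-1$'' is not a consequence of $F$ being open and parallelizable (openness alone only yields dimension $\leq 2n-3$); it is obtained by Morse theory for $\Vert z\Vert^2$ on the nearby nonsingular level $f^{-1}(c)\cap D_\epsilon$, which is a complex $(n-1)$-dimensional manifold, so that statement should be proved that way or cited, not attributed to parallelizability.
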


Also we call the number of spheres, the number of \textit{vanishing cycles} of
$f$ at $0$. The following theorem is due to L\^e [14]:

\begin{theorem}
If $V_{f}$ and $V_{g}$ are topologically equisingular then the number of
vanishing cycles at $0$ of $f$ and $g$ are the same.
\end{theorem}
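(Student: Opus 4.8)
The plan is to reduce the statement to the topological invariance of the number $\mu$, which by the Milnor fibration theorem (Theorem~2.2) equals $\operatorname{rank}\tilde H_{n-1}(F)$ for the Milnor fiber $F$, since $F$ has the homotopy type of a bouquet of $\mu$ spheres of dimension $n-1$. The whole idea is to re-express this number through the topology of the \emph{complement} of the hypersurface, because the equisingularity homeomorphism $\phi$ restricts to a homeomorphism of complements even though it need not respect the fibration directly.

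First I would pass from the homeomorphism of pairs to a homeomorphism of link complements. Writing $K_f = V_f \cap S_\epsilon$ for the link and invoking the local conical structure of analytic sets, the pair $(D_\epsilon, V_f \cap D_\epsilon)$ is homeomorphic to the cone on $(S_\epsilon, K_f)$; restricting $\phi$ then yields a homeomorphism $U \setminus V_f \to U' \setminus V_g$ and, after deformation retraction onto the boundary, a homotopy equivalence of link complements $M_f := S_\epsilon \setminus K_f \simeq M_g := S_{\epsilon'} \setminus K_g$. Next I would identify the fiber with an infinite cyclic cover: the Milnor fibration $\psi_\epsilon \colon M_f \to S^1$ is classified by a class $\lambda_f \in H^1(M_f;\mathbb{Z}) = [M_f, S^1]$, and pulling back the universal cover $\mathbb{R} \to S^1$ exhibits $F$ as homotopy equivalent to the infinite cyclic cover $\widetilde{M_f}$ attached to $\ker\bigl(\lambda_f \colon \pi_1(M_f) \to \mathbb{Z}\bigr)$. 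Hence $\mu = \operatorname{rank} H_{n-1}(\widetilde{M_f})$, a quantity depending only on $M_f$ together with the class $\lambda_f$.

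It then remains to check that the homotopy equivalence $M_f \simeq M_g$ carries $\lambda_g$ to $\pm\lambda_f$, for then the two infinite cyclic covers are identified and $\mu_f = \mu_g$ follows at once. When $n \geq 3$ the link $K_f$ is connected, so $H_1(M_f;\mathbb{Z}) \cong \mathbb{Z}$ is generated by a single meridian and $H^1(M_f;\mathbb{Z}) \cong \mathbb{Z}$; since $\lambda_f$ is a generator, any induced isomorphism must send generator to generator up to sign, and the argument closes cleanly.

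The hard part is exactly this final matching of the fibration classes, which is where the two genuine difficulties sit. The first is making the reduction of the previous paragraph rigorous, i.e. guaranteeing that $\phi$, a mere homeomorphism of pairs fixing $0$, induces on complements an equivalence that respects meridians; this rests on the conical structure and on characterizing meridians intrinsically as small loops linking $V_f$. The second, and the true obstruction, is the curve case $n = 2$: there $K_f$ has several components, $H_1(M_f)$ has rank $>1$, and one must ensure that $\phi$ preserves the \emph{total} linking homomorphism $\lambda_f$ rather than merely permuting meridians (possibly with signs), which requires finer information about the embedding and is, classically, precisely the content secured by the Burau--Zariski analysis.
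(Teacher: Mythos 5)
The first thing to note is that the paper contains no proof of this statement for you to be compared against: it appears there as Theorem~2.3, quoted as a known result of L\^e with a bare citation of [14]. So your proposal can only be measured against the classical argument, and on that score it is essentially a reconstruction of the standard (L\^e--Milnor) proof: conical structure gives a homotopy equivalence of link complements $M_f \simeq M_g$; the Milnor fiber is homotopy equivalent to the infinite cyclic cover of $M_f$ classified by $\lambda_f \in H^1(M_f;\mathbb{Z})$; hence $\mu = \operatorname{rank} H_{n-1}\bigl(\widetilde{M_f}\bigr)$ is an invariant of the pair $(M_f,\lambda_f)$ up to homotopy. For $n \geq 3$ your closing step is correct and in fact simpler than you make it: by Alexander duality $H_1(M_f;\mathbb{Z}) \cong H^{2n-3}(K_f;\mathbb{Z}) \cong \mathbb{Z}$ since the link $K_f$ is connected, and $\lambda_f$ is primitive because the Milnor fiber is connected (homotopy exact sequence of the fibration), so any homotopy equivalence carries $\lambda_g$ to $\pm\lambda_f$ automatically; no characterization of meridians is needed in that range.

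The genuine gap is exactly the one you flag: $n=2$. There your argument really does not close, and it cannot be closed by sign bookkeeping on meridians: for a homeomorphism of pairs the induced map can send the meridian classes $m_i$ to $\epsilon_i m'_{\sigma(i)}$ with incoherent signs $\epsilon_i \in \{\pm 1\}$, and the infinite cyclic covers attached to $\sum m_i^*$ and to $\sum \epsilon_i m_i^*$ can have different homology, so preservation of the total linking class is not formal. The correct repair is a citation, but of a stronger classical fact than the one the paper attributes to Burau and Zariski: what is needed is not merely invariance of the multiplicity of plane curve germs, but the full topological classification --- the topological type of a plane curve germ determines the Puiseux characteristics of its branches and their pairwise intersection multiplicities, hence determines the delta invariant $\delta$ and the number of branches $r$, and then $\mu = 2\delta - r + 1$ is determined. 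With that input your outline becomes a complete proof in all dimensions; without it, the $n=2$ case of the theorem (which the statement, as given, includes) remains open in your argument.
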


Now we recall some definitions and facts about deformations of functions. A
\textit{deformation} of a holomorphic function $ f: (\mathbb{C}^{n},0)
\rightarrow (\mathbb{C},0)$ is a family $(f_t)_{t \in [0,1]}$ of germs of
holomorphic functions with isolated singularities at $0 \in
\mathbb{C}^{n}$. The
\textit{jump of the family} $(f_t)$ is
$\mu(f_0)-\mu(f_t)$, where $\mu$ is the Milnor number at the origin. It is
independent of $ t $ for $ t $ small enough, moreover by the upper
semi-continuity
of $\mu$ this number is a non-negative integer.

We use frequently the following theorem proved by L\^e and Ramanujam [12]:
\begin{theorem}
Let $(f_s)_{s \in [0,1]}$ be a $C^{\infty}$ family of hypersurfaces having an
isolated singularity at the origin. If the Milnor number of singularity does
not change then the topological type of singularity does not change too
provided that
$n\neq3$.
\end{theorem}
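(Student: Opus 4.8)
The plan is to realize the constancy of the Milnor number as an h-cobordism between the Milnor fibers and then invoke the h-cobordism theorem, which is exactly where the restriction $n\neq 3$ enters. First I would set up the family as a single map $F(z,s)=f_s(z)$ on $D_\epsilon\times[0,1]$, whose critical locus is the segment $\{0\}\times[0,1]$. Since $\mu(f_s)$ is finite and constant and $[0,1]$ is compact, I would fix once and for all a Milnor radius $\epsilon$ and a tube radius $\eta$ that work simultaneously for every $s$: by Lemma 2.1 the critical points of the perturbed functions cannot escape $D_\epsilon$, and by upper semicontinuity the total Milnor number carried inside $D_\epsilon$ equals $\mu$ for all $s$, so no vanishing cycle is created or destroyed along the deformation. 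The goal is then to build a homeomorphism $H:D_\epsilon\times[0,1]\to D_\epsilon\times[0,1]$ commuting with the projection to $[0,1]$, equal to the identity at $s=0$, fixing the origin, and carrying $V_{f_0}\times\{s\}$ onto $V_{f_s}\times\{s\}$.

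The first step is to trivialize everything away from the singular point. Over the sphere $S_\epsilon$ and over the Milnor tube $\{\,|F|=\eta\,\}$ the map $(z,s)\mapsto(F(z,s),s)$ is a proper submersion on the relevant strata, so Thom's first isotopy lemma (equivalently, integration of a lifted stratified vector field) yields a trivialization of the family of links $K_s=V_{f_s}\cap S_\epsilon$ and of the family of Milnor fibrations over $S^1$. In particular the links $K_s$ are all ambient isotopic inside $S_\epsilon$, which handles the outer boundary; the only remaining issue is to fill in the trivialization across the origin.

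The heart of the argument is the product structure of the Milnor fibers, and this is where I expect the real work to be. By Theorem 2.2 each Milnor fiber $F_s$ is $(n-2)$-connected of real dimension $2n-2$, with the homotopy type of a bouquet of $\mu$ spheres $S^{n-1}$. I would assemble the family of fibers into a single compact manifold-with-corners $W$, a cobordism from $F_0$ to $F_1$. Constancy of $\mu$ forces the inclusions $F_0\hookrightarrow W$ and $F_1\hookrightarrow W$ to induce isomorphisms on homology, since no homology is born or dies along the family; as the fibers are simply connected for $n\ge 3$, so is $W$, and by the Whitehead theorem these inclusions are homotopy equivalences, that is, $W$ is an h-cobordism. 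The fibers have dimension $2n-2\ge 5$ precisely when $n\ge 4$, so the h-cobordism theorem applies and gives a product structure $W\cong F_0\times[0,1]$. For $n=3$ the fibers are $4$-dimensional and the h-cobordism theorem is unavailable, which is exactly the excluded case.

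Finally I would glue the product structure on the fibers to the boundary trivialization obtained in the second step, using a matching on $K_s$ carried out relatively over $[0,1]$, and then cone the whole construction off to a neighborhood of the origin via the local conical structure of the pairs $(D_\epsilon,V_{f_s})$. This produces the homeomorphism $H$ and hence the invariance of the topological type. The main obstacle, and the reason for the hypothesis $n\neq 3$, is precisely the control of the topology in real dimension $2n-2=4$: the passage from \emph{same homotopy type of Milnor fibers}, which constancy of $\mu$ gives for free, to \emph{same topological type} requires an h-cobordism argument that fails for $4$-dimensional fibers, so the case $n=3$ cannot be reached by this method.
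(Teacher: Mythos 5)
You should first know that the paper contains no proof of this statement: it is Theorem 2.4, quoted from Lê and Ramanujam (the paper on ``The invariance of Milnor number implies the invariance of the topological type,'' listed as item [14] in the references, though the text cites [12]). So the comparison has to be with the original Lê--Ramanujam argument, and your plan does capture its central idea: convert $\mu$-constancy into an h-cobordism between Milnor fibers and invoke the h-cobordism theorem, whose failure in low dimensions is exactly what excludes $n=3$. In that sense the strategy is the right one.

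However, there are genuine gaps in the execution. The most serious is your first step: fixing ``once and for all a Milnor radius $\epsilon$ that works simultaneously for every $s$.'' Compactness of $[0,1]$ and openness of transversality give a single $\epsilon$ such that each $V_{f_s}$ is transverse to the one sphere $S_\epsilon$ and $f_s$ has no critical points in $D_\epsilon$ besides the origin (this much does follow from Lemma 2.1), but they do \emph{not} give transversality of $V_{f_s}$ to all spheres of radius $\leq \epsilon$, which is what a Milnor radius means. This is not pedantry: if such a uniform conical radius existed, Thom's isotopy lemma alone would trivialize the family of pairs $(D_\epsilon, V_{f_s})$ with no dimension restriction, and the hypothesis $n\neq 3$ would be superfluous. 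The whole point of the h-cobordism argument is to bridge the mismatch between a Milnor radius $\epsilon_0$ for $f_0$ and a possibly much smaller Milnor radius $\epsilon_1$ for $f_t$. Correspondingly, your cobordism $W$ ``assembling the family of fibers'' is not well defined (the fibers live at radii and over values that vary with $s$, so their union has no natural manifold structure); in Lê--Ramanujam the h-cobordism is the annular piece of a \emph{single} fiber, $f_t^{-1}(\delta)\cap(D_{\epsilon_0}\setminus \mathrm{int}\,D_{\epsilon_1})$, with the homology isomorphisms at both ends coming from the conservation of $\mu$. Two further omissions: a product structure on the fiber cobordism alone does not recover the germ of the pair $(\mathbb{C}^n,V_{f_s})$ --- one must also prove that the boundary cobordism between the links is an h-cobordism (using Milnor's $(n-3)$-connectivity of links) and apply the \emph{relative} h-cobordism theorem, which is where the condition $\dim K_s = 2n-3\geq 5$, i.e.\ $n\geq 4$, actually enters; and the case $n=2$, which the statement includes, is untouched by your argument and requires a separate plane-curve proof.
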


In [11], theorem (2.4) is generalized which we recall in section (5).
Finally we recall an interesting result of P. Samuel [19]:

\begin{theorem}
Every germ $V_f$ is analytically equivalent with $V_g$ in which $g$ is a
polynomial.
\end{theorem}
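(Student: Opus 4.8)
The plan is to deduce the statement from the finite determinacy of germs with an isolated singularity: I will show that such a germ coincides, up to analytic equivalence, with a sufficiently high truncation of its own Taylor series, and that such a truncation is a polynomial. Write $\mathfrak{m}\subset\mathcal{O}_n$ for the maximal ideal of germs vanishing at $0$ and $J(f)=\langle\frac{\partial f}{\partial z_1},\dots,\frac{\partial f}{\partial z_n}\rangle$ for the Jacobian ideal. Because $0$ is an isolated singularity, $\mu=\dim_{\mathbb{C}}\mathcal{O}_n/J(f)<\infty$, so $\mathcal{O}_n/J(f)$ is a finite-dimensional local Artinian algebra; hence its maximal ideal is nilpotent, giving $\mathfrak{m}^{N}\subset J(f)$ for some $N$. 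Consequently $\mathfrak{m}^{N+2}=\mathfrak{m}^{2}\mathfrak{m}^{N}\subset\mathfrak{m}^{2}J(f)$, so with $k=N+1$ the finite-determinacy estimate $\mathfrak{m}^{k+1}\subset\mathfrak{m}^{2}J(f)$ holds.

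Granting this estimate, the finite-determinacy theorem for right equivalence asserts that every germ having the same $k$-jet as $f$ is right equivalent to $f$; in particular the polynomial truncation $g=f_\nu+f_{\nu+1}+\cdots+f_k$ satisfies $g=f\circ\phi$ for a germ of biholomorphism $\phi$, and then $\phi$ carries $V_g$ onto $V_f$, which is exactly the asserted analytic equivalence with $g$ a polynomial. The real content is thus the finite-determinacy theorem, which I would establish by the Thom--Levine homotopy method. Consider the family $F(z,t)=f(z)+t\bigl(g(z)-f(z)\bigr)$ for $t\in[0,1]$, joining $f$ to $g$, and note that $g-f\in\mathfrak{m}^{k+1}\subset\mathfrak{m}^{2}J(f)$. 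First I would seek a smooth one-parameter family of germs of biholomorphisms $\phi_t$, with $\phi_0=\mathrm{id}$ and $F(\phi_t(z),t)$ independent of $t$; differentiating in $t$ reduces this to solving, for each $t$, the homological equation
\[
\frac{\partial F}{\partial t}+\sum_{i=1}^{n}\frac{\partial F}{\partial z_i}\,X_i=0
\]
for a time-dependent vector field $X=(X_1,\dots,X_n)$ with $X_i\in\mathfrak{m}^{2}$, whose flow then produces $\phi_t$ with $\phi_1$ the sought coordinate change.

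The hard part is the solvability of this homological equation uniformly in the parameter $t$. One knows $\partial F/\partial t=g-f\in\mathfrak{m}^{2}J(f)=\mathfrak{m}^{2}J(F_0)$, and the task is to propagate this membership, with uniform control, to every $t\in[0,1]$. I would handle this by working with germs in the $n+1$ variables $(z,t)$ and applying a parametrized form of Nakayama's lemma: since $J(F_t)$ depends analytically on $t$ and agrees with $J(f)$ modulo terms of order greater than $k$, the inclusion $\mathfrak{m}^{k+1}\subset\mathfrak{m}^{2}J(F_t)$ valid at $t=0$ propagates to a neighborhood of each parameter and, by compactness of $[0,1]$, to the whole path, yielding solutions $X_i\in\mathfrak{m}^{2}$ depending analytically on $t$. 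Integrating the flow of $X$ on the germ level then produces biholomorphisms $\phi_t$ fixing the origin, and setting $t=1$ gives $g=f\circ\phi_1$, hence $\phi_1(V_g)=V_f$, which is the desired analytic equivalence.
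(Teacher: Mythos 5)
The paper does not prove this statement at all: it is quoted as a result of P.~Samuel [19], with only the follow-up remark that $g$ may be chosen by truncating the Taylor expansion of $f$ somewhere. Your proposal is therefore a genuinely different, self-contained route, and it is the standard modern one: Tougeron-style finite determinacy. Your argument is correct, with one caveat you should make explicit: it works only under the paper's standing hypothesis (stated in the Introduction and Section~2) that $0$ is an isolated singularity of $f$. That hypothesis is not decorative here --- $\mu<\infty$, equivalently $\mathfrak{m}^{N}\subset J(f)$ for some $N$, is exactly what makes a germ finitely determined, and a germ with non-isolated singularity is never finitely right-determined, so your method cannot yield the literal ``every germ'' statement; Samuel's theorem is likewise about isolated singularities, so this matches the intended meaning. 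The chain $\mu<\infty\Rightarrow\mathfrak{m}^{N}\subset J(f)\Rightarrow\mathfrak{m}^{k+1}\subset\mathfrak{m}^{2}J(f)$ with $k=N+1$, the determinacy theorem, and the observation that $g=f\circ\phi$ forces $\phi(V_g)=V_f$ are all sound. In your homotopy-method sketch, the ``propagation plus compactness of $[0,1]$'' step can be replaced by a cleaner argument needing no compactness: since $g-f\in\mathfrak{m}^{k+1}$, one has $J(f)\subset J(F_t)+\mathfrak{m}^{k}$ for every $t$, hence $\mathfrak{m}^{k+1}\subset\mathfrak{m}^{2}J(F_t)+\mathfrak{m}^{k+2}=\mathfrak{m}^{2}J(F_t)+\mathfrak{m}\cdot\mathfrak{m}^{k+1}$, and Nakayama's lemma applied to the finitely generated module $(\mathfrak{m}^{k+1}+\mathfrak{m}^{2}J(F_t))/\mathfrak{m}^{2}J(F_t)$ gives $\mathfrak{m}^{k+1}\subset\mathfrak{m}^{2}J(F_t)$ for all $t$ at once. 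A bonus of your route: it proves precisely the paper's unproved addendum that $g$ may be taken to be a truncation of the Taylor series of $f$, which the bare citation of Samuel's theorem does not make explicit.
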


Moreover we may choose a polynomial $g$ with cutting the Taylor expansion
of $f$ in somewhere.
By the theorem of (2.5) it is enough to consider polynomials to prove the
conjecture.
\section[Topological right equivalency]{The topological right
equivalent complex hypersurfaces} In this section we recall several
ways to define a topological type of a holomorphic function and
relations between them according to [10], [17] and [20].

Let $ f, g : (\mathbb{C}^n,0) \rightarrow (\mathbb{C},0) $ be two germs
of holomorphic functions with an isolated singularity at the origin.
\begin{definition}
$f$ and $g$ are topologically right equivalent if there is a germ of
homeomorphism
$ \varphi: (\mathbb{C}^n,0) \rightarrow (\mathbb{C}^n,0) $ satisfying
$f=g \circ \varphi $
\end{definition}
\begin{definition}
$f$ and $g$ are topologically right-left equivalent if there are germs
of homeomorphism $\varphi: (\mathbb{C}^n,0) \rightarrow
(\mathbb{C}^n,0)$ and
$\psi: (\mathbb{C},0) \rightarrow (\mathbb{C},0) $ satisfying
$f=\psi \circ g \circ \varphi $
\end{definition}
Put $V_f:=f^{-1}(0)$. By [15], $S_{\varepsilon}^{2n-1} \cap V_f$ is a smooth
$ (2n-3) $-dimensional manifold for $\varepsilon > 0$ sufficiently
small. The pair
$(S_{\varepsilon}^{2n-1} , S_{\varepsilon}^{2n-1} \cap V_f )$ is called
the link of the singularity of $f$.
\begin{definition}
$f$ and $g$ are link equivalent if
$(S_{\varepsilon}^{2n-1} , S_{\varepsilon}^{2n-1} \cap V_f )$ is
homeomorphic to
$(S_{\varepsilon}^{2n-1} , S_{\varepsilon}^{2n-1} \cap V_g)$ for all
sufficiently small  $\varepsilon$.
\end{definition}
By the definitions, the right equivalence implies the right-left
equivalence, which in turn implies the V-equivalence. The outstanding
result, obtained by King [10] in $n \neq 3$ and by Perron [17] in $ n=3
$, is the following:
\begin{theorem}
The topological V-equivalence implies topologically right-left
equivalence. Moreover
if $f$ and $g$ are topologically right-left equivalent then $g$ is
topologically right equivalent either to $f$ or to $\overline{f}$, the complex
conjugate of $f$.
\end{theorem}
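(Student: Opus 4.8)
The plan is to realise each germ as the cone on its Milnor open book and to promote the homeomorphism of zero sets supplied by the hypotheses into a homeomorphism of these open books, from which the reparametrisations $\varphi$ and $\psi$ can be read off.

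First I would fix the geometric picture coming from Milnor. For $\epsilon$ small the pair $(D_\epsilon, V_f \cap D_\epsilon)$ is homeomorphic to the cone on the link pair $(S_\epsilon, K_f)$, where $K_f = S_\epsilon \cap V_f$, and by the Milnor fibration theorem (2.2) the map $\psi_\epsilon = f/\Vert f\Vert$ makes $S_\epsilon \setminus K_f$ the total space of a fibration over $S^1$ with binding $K_f$; thus $f$ determines an \emph{open book} on $S_\epsilon$ whose pages are the Milnor fibres. Up to right-left equivalence the germ $f$ is recovered from this open book: a homeomorphism of open books cones off to a homeomorphism $\varphi$ of $(\mathbb{C}^n,0)$, and the freedom to reparametrise the page-radius $\Vert f\Vert$ together with the $S^1$-coordinate is exactly the freedom afforded by the target homeomorphism $\psi$. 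Hence the whole theorem becomes a comparison of open books.

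For the first assertion, restricting a V-equivalence $\phi$ to concentric spheres yields a homeomorphism of link pairs $\phi_\epsilon : (S_\epsilon, K_f) \to (S_\epsilon, K_g)$. The decisive step is to isotope $\phi_\epsilon$ so that it carries pages to pages. Here I would exploit that, $V_f$ being a complex hypersurface, $K_f$ is a two-sided cooriented submanifold of $S_\epsilon$ and the class in $H^1(S_\epsilon \setminus K_f;\mathbb{Z})$ defining the fibration is canonical up to sign, with the Milnor fibre furnishing a Seifert hypersurface of minimal complexity. A uniqueness statement for fibred links then lets one isotope $\phi_\epsilon$ so as to send the open book of $f$ either to that of $g$ or to its mirror (the open book with reversed $S^1$-direction). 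Coning off produces $\varphi$ and $\psi$ with $f = \psi \circ g \circ \varphi$, the desired right-left equivalence.

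For the second assertion, suppose $f = \psi \circ g \circ \varphi$. The germ $\psi : (\mathbb{C},0)\to(\mathbb{C},0)$ is either orientation-preserving or orientation-reversing. If it preserves orientation it is isotopic to the identity through homeomorphisms fixing $0$; transporting this isotopy through the Milnor fibration and absorbing it into $\varphi$ eliminates $\psi$ and leaves $f = g\circ\varphi'$, so $g$ is right equivalent to $f$. If $\psi$ reverses orientation, write it as complex conjugation followed by an orientation-preserving germ; the latter is removed as before, and the residual conjugation reverses the coorientation of the binding and the direction of the monodromy, which is precisely the effect of passing from $f$ to $\overline{f}$, giving right equivalence of $g$ to $\overline{f}$. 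The sign ambiguity ``$f$ or $\overline{f}$'' is thus exactly the orientation of the target $\mathbb{C}$, which right-left equivalence cannot detect. The main obstacle is the page-to-page isotopy of $\phi_\epsilon$, i.e. the uniqueness of the fibred-link structure: the available engulfing and $h$-cobordism arguments require the pages, of real dimension $2n-2$, and the ambient sphere $S^{2n-1}$ to be high-dimensional, while the borderline case $n=3$, with four-dimensional pages inside $S^5$, lies in the range where these techniques fail. This is the same obstruction that forces the hypothesis $n\neq 3$ in Lê--Ramanujam (2.4); accordingly I would run the argument above for $n\neq 3$ following King [10] and invoke the separate low-dimensional input of Perron [17] for $n=3$.
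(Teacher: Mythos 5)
The first thing to note is that the paper does not prove this statement at all: Theorem 3.1 is quoted as the combined result of King [10] (for $n \neq 3$) and Perron [17] (for $n=3$), so there is no in-paper proof to compare yours against. Your sketch must therefore be measured as a reconstruction of the content of those papers, and as such it has two genuine gaps.

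The first gap is at the very start: a germ of homeomorphism $\phi$ sending $V_f$ onto $V_g$ does not restrict to concentric spheres --- $\phi(S_\epsilon)$ is merely some topologically embedded sphere, not $S_\epsilon$, so ``restricting a V-equivalence to concentric spheres yields a homeomorphism of link pairs'' is false as stated. Passing from V-equivalence to link equivalence is itself a theorem, namely Saeki's [20], which the paper records separately (``Conversely Saeki [20] showed that the topological V-equivalence implies the link equivalence'') and which appeared a decade after King's paper; it cannot be obtained by naive restriction, nor by coning alone, since a homeomorphism of cones does not in general imply a homeomorphism of their bases (double-suspension--type phenomena are exactly the obstruction). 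The second gap is that your ``decisive step'' --- the uniqueness statement for fibred links that lets you isotope $\phi_\epsilon$ to a page-preserving homeomorphism --- is precisely the main theorem of King's paper; invoking it as a black box, with only a gesture toward engulfing and $h$-cobordism, defers the entire mathematical content of the statement rather than supplying it. The same applies to the absorption lemma in your second part (that an orientation-preserving germ homeomorphism of $(\mathbb{C},0)$ can be pushed into the source through the fibration): the idea is right, but it requires the local triviality of $g$ over a punctured disc together with a continuity argument along $V_g$, neither of which is given. In short, your outline correctly identifies the architecture of the King--Perron argument (open books, uniqueness of the fibred structure, orientation analysis in the target) and correctly locates the $n=3$ difficulty, but as written it is a plan rather than a proof: of the two load-bearing steps, one (Saeki's theorem) is misstated as a triviality and the other (fibration uniqueness) is assumed outright.
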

Using theorem (3.1), Risler and Trotman in [18] proved that right-left
bilipschitz equivalence implies equimultiplicity.

Since $(D_{\varepsilon}^{2n-1} , D_{\varepsilon}^{2n-1} \cap V_f )$ is
homeomorphic to the cone cover over the link
$(S_{\varepsilon}^{2n-1} , S_{\varepsilon}^{2n-1} \cap V_f )$ ([15]),
the link equivalence implies the V-equivalence. Conversely Saeki [20]
showed that the topological V-equivalence implies the link
equivalence. This means that there is a homeomorphism $\varphi_1:
(\mathbb{C}^n,0) \rightarrow (\mathbb{C}^n,0)$, sending
$V_f$ onto $V_g$ and such that $\vert \varphi_1(z) \vert=\vert z \vert$.
By theorem (3.1) there is a homeomorphism $\varphi_2: (\mathbb{C}^n,0)
\rightarrow (\mathbb{C}^n,0)$ such that $\vert f(z)\vert = \vert g
\circ \varphi_2 (z)\vert$.
Comte, Milman and Trotman [6] showed that if there is a germ of homeomorphism
$\varphi: (\mathbb{C}^n,0) \rightarrow (\mathbb{C}^n,0)$ having
\textit{simultaneously} the properties of $\varphi_1$ and of
$\varphi_2$ then the multiplicity conjecture is true. In fact they proved that
it suffices to assume that there are positive constants $A$, $B$, $C$ and
$D$ such that:

(1) $A \vert z \vert \leq \vert \varphi (z) \vert \leq B \vert z
\vert$, for all $z$ near 0, and

(2) $C\vert f(z) \vert \leq \vert g \circ \varphi (z)\vert \leq D \vert
f(z)\vert$, for all $z$ near 0.

Now we prove that it is enough to assume the conditions (1) and (2) are
valid for some
special sequences converge to the origin. Given two holomorphic function germs
$f,g: (\mathbb{C}^n,0) \rightarrow (\mathbb{C},0)$, by an analytic
change of coordinates
one may assume that the $z_1$-axis is not contained in the tangent
cones $C(V_f), C(V_g)$ (respectively the zero set of first non zero jet
of $f$ and $g$), so that $f(z_1,0, \cdots , 0) \neq 0$ and $g(z_1,0,
\cdots , 0) \neq 0$ for a neighborhood of 0 in the $z_1$-axis, and by
theorem (2.5) one may assume $f$ and $g$ are polynomials. In this
situation we have the following:
\\
\begin{theorem}
Suppose there are a germ of homeomorphism $\varphi: (\mathbb{C}^n,0)
\rightarrow (\mathbb{C}^n,0) $ with inverse $\psi$ and positive
constants $A$, $B$, $C$ and $D$ and two sequences $w_m$ and ${{w'}}_m$
in the $z_1$-axis which converge to the origin with the following properties:

(i) $ \vert \psi ({w'}_m ) \vert \leq A \vert w'_m \vert $,
$ \vert \varphi ({w}_m ) \vert\leq B \vert w_m \vert$ and

(ii) $C\vert f({w}_m) \vert \leq \vert g \circ \varphi ({w}_m)\vert $,
$D \vert g({w'}_m) \vert \leq \vert f \circ \psi ({w'}_m) \vert $

then $ m_f=m_g $.
\end{theorem}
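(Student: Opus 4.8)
The plan is to exploit the special structure of $f$ and $g$ along the $z_1$-axis, where the normalization of coordinates made just before the statement guarantees clean power-law asymptotics, and then to play the two sequences $w_m$ and $w'_m$ off against each other so as to obtain the two inequalities $m_f\geq m_g$ and $m_g\geq m_f$ separately.

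First I would record the behaviour of $f$ and $g$ restricted to the $z_1$-axis. Writing $f=f_{m_f}+f_{m_f+1}+\cdots$ with $f_{m_f}$ the leading homogeneous form, the assumption that the $z_1$-axis is not contained in the tangent cone $C(V_f)$ means precisely that $f_{m_f}(1,0,\ldots,0)\neq 0$. Hence $f(t,0,\ldots,0)=a\,t^{m_f}+O(t^{m_f+1})$ with $a\neq 0$, so that for $w$ on the $z_1$-axis and $|w|$ small enough
\[
\tfrac{1}{2}|a|\,|w|^{m_f}\leq |f(w)|\leq 2|a|\,|w|^{m_f},
\]
and similarly $|g(w)|\asymp |w|^{m_g}$ with nonzero leading coefficient. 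I would also note the elementary global upper bounds $|f(z)|\leq K|z|^{m_f}$ and $|g(z)|\leq K|z|^{m_g}$, valid for $z$ near $0$: these follow by grouping the finitely many Taylor terms and using $|z|^{k}\leq |z|^{m}$ for $k\geq m$ and $|z|\leq 1$.

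Next I would establish $m_f\geq m_g$ from the sequence $w_m$. Chaining the inequality $C|f(w_m)|\leq |g(\varphi(w_m))|$ of (ii) with the global upper bound on $g$ and then with (i) gives
\[
C|f(w_m)|\leq K|\varphi(w_m)|^{m_g}\leq KB^{m_g}|w_m|^{m_g}.
\]
Since $w_m$ lies on the $z_1$-axis and tends to $0$, the lower bound for $|f(w_m)|$ applies; dividing by $|w_m|^{m_g}$ yields $\tfrac{1}{2}C|a|\,|w_m|^{m_f-m_g}\leq KB^{m_g}$. If $m_f<m_g$ the left-hand side would tend to $+\infty$ as $w_m\to 0$, a contradiction, so $m_f\geq m_g$. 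The symmetric argument, using the inverse $\psi$, the sequence $w'_m$, the inequality $D|g(w'_m)|\leq |f(\psi(w'_m))|$ from (ii), the global upper bound on $f$, and the estimate $|\psi(w'_m)|\leq A|w'_m|$ from (i), gives $D|g(w'_m)|\leq KA^{m_f}|w'_m|^{m_f}$ and hence $m_g\geq m_f$. Combining the two inequalities yields $m_f=m_g$.

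The argument is essentially elementary once the axis asymptotics are in place, and its whole point is that only \emph{one-sided} estimates are needed, and only along the distinguished sequences rather than on a full neighborhood — this is the sharpening of the Comte--Milman--Trotman hypotheses (1) and (2). The only step that genuinely requires the preliminary coordinate normalization, and hence the one I expect to be the crux, is the two-sided estimate $|f(w)|\asymp |w|^{m_f}$ on the axis: without transversality of the $z_1$-axis to the tangent cone the leading coefficient $a$ could vanish, and the lower bound for $|f(w_m)|$ — which is exactly what drives both contradictions — would break down.
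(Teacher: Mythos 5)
Your proposal is correct and follows essentially the same route as the paper's own proof: both chain inequality (ii) with the bound $|\varphi(w_m)|\leq B|w_m|$ from (i) and the monomial expansion of $g$ (your global bound $|g(z)|\leq K|z|^{m_g}$), then use the nonvanishing of the leading form on the $z_1$-axis to force a contradiction between the orders of vanishing; the paper treats only the case $m_g>m_f$ in detail and declares the other ``similar,'' which is exactly your symmetric second step with $\psi$, $w'_m$, $A$ and $D$. Your identification of the axis-transversality normalization as the crux matches the paper's use of $f_k(w_1)\neq 0$.
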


The conditions (i) and (ii) are slightly weaker than conditions (1) and
(2) above.
\begin{proof}
Write $$ f(z)=f_k(z)+f_{k+1}(z)+ \cdots +f_{k+r}(z),  $$
$$ g(z)=g_l(z)+g_{l+1}(z)+ \cdots +g_{l+s}(z). $$ $f_i$ and $g_j$ are
homogeneous parts of degree $i$ and $j$ of $f$ and $g$ respectively. $f_k$ and
$g_l$ are not identically zero. We want to prove $k=l$. By contrary suppose
$l>k$. The other case is similar. Let $w_1=(z_1,0, \cdots ,0)$ and
$w_m=(t_mz_1,0, \cdots ,0)$,
$t_m \neq 0$ and converges to the origin. Also write $g$ in the
following form:
$$g(z)=\sum^{l+s}_{j=l}\sum_{\vert \beta \vert=j}C^j_{\beta} z^{\beta},$$
where $z=(z_1,\cdots,z_n)$ and $\beta=(\beta_1,\cdots,\beta_n)$,
$\beta_i \in \mathbb{N} \cup \{0\}$. Now we have
$$f(w_m)=f_k(w_m)+f_{k+1}(w_m)+ \cdots +f_{k+r}(w_m)~~ or$$
$$f(w_m)=t^k_m[f_k(w_1)+t_mf_{k+1}(w_1)+ \cdots +t^{r}_mf_{k+r}(w_1)] $$ and
$$\vert (g \circ \varphi)(w_m) \vert \leq  \sum^{l+s}_{j=l}\sum_{\vert
\beta \vert=j}
\vert C^j_{\beta} \vert B^j {\vert t_mz_1 \vert}^j$$ by (i). Now we use
condition (ii). It is:
$$  C\vert f(w_m) \vert\leq \vert g \circ \varphi ({w}_m)\vert ~~ or$$
$$ C\vert t^k_m [ f_k(w_1)+t_mf_{k+1}(w_1)+ \cdots +t^{r}_mf_{k+r}(w_1)
] \vert \leq \sum^{l+s}_{j=l}\sum_{\vert \beta \vert=j}
\vert C^j_{\beta} \vert B^j {\vert t_mz_1 \vert}^j.$$
Divided two sides of above inequality by $ \vert t^k_m \vert$ we obtain
the following:
$$ C\vert f_k(w_1)+t_mf_{k+1}(w_1)+ \cdots +t^{r}_mf_{k+r}(w_1) \vert
\leq \sum^{l+s}_{j=l}\sum_{\vert \beta \vert=j}
\vert C^j_{\beta} \vert B^j {\vert t_mz_1 \vert}^{j-k},$$
or
$$ C\vert f_k(w_1) \vert \leq \sum^{l+s}_{j=l}\sum_{\vert \beta \vert=j}
\vert C^j_{\beta} \vert B^j {\vert t_mz_1 \vert}^{j-k} +
C \vert t^{1}_mf_{k+1}(w_1)+ \cdots +t^{r}_mf_{k+r}(w_1) \vert.$$
When $t_m$ goes to zero, the right hand of the last inequality goes to
zero but the left hand is a positive constant. This contradiction shows
$l=k$.
\end{proof}
\section{The zeta function of a monodromy}
Now we recall some features from [1] and [2]. Let $ f: \mathbb{C}^n
\rightarrow \mathbb{C}$ be a polynomial so that $f(0)=0$ and consider
the hypersurface defined by it, $V_f=f^{-1}(0)$. The map
$$\pi : z \in  S_{\varepsilon}^{2n-1} \backslash V_f \longmapsto
arg(f(z)) \in S^1,$$
defines a Milnor fibration of the hypersurface $V_f$ at the origin. The
fibre $F_{\theta}= \pi^{-1}(\theta)$, $ \theta \in S^1$, is a
$2(n-1)$-dimensional differential manifold and the characteristic
homeomorphism of this fibration
$$h: F_{\theta} \rightarrow F_{\theta}$$
is the geometric monodromy of $V_f$ at the origin. By definition the
zeta function of $h$ is the following:
$$Z(t)=\prod_{q \geq 0}
\{\mathrm{det}(Id^*-th^*;H^q(F_{\theta},\mathbb{C}))\}^{(-1)^{q+1}}.$$
When the origin of $\mathbb{C}^n$ is an isolated singular point of
$V_f$, one has
\begin{displaymath}
H^q(F_{\theta},\mathbb{C})=\left\{ \begin{array}{lll}
            \mathbb{C} & \textrm{ $q=0$}\\
          0 & \textrm{$q \neq 0, q \neq n$}\\
          \mathbb{C}^{\mu} & \textrm{$q =n$,}
          \end{array} \right.
\end{displaymath}

where $\mu$ is the Milnor number of $f$ and therefore the
characteristic polynomial $\Delta(t)$ of the monodromy at degree $n$ is
deduced from the zeta function $Z(t)$ by the formula
$$\Delta(t)=t^{\mu}[\frac{t-1}{t}Z(\frac{1}{t})]^{(-1)^{n+1}}. $$
For an integer $k \geq 1$; let the integer number
$$\Lambda(h^k)= \sum_{q \geq 0}(-1)^q
\mathrm{Trace}[(h^*)^k;H^q(F_{\theta},\mathbb{C})] $$
the Lefschetz number of the $k$-th power of $h$. Let $s_1,s_2, \cdots$ be
the integers defined by the following recurrence relations:
$$\Lambda(h^k)=\sum_{i \vert k}s_i,$$ $k \geq 1$, then the zeta function of
$h$ is given by
$$ Z(t)=\prod_{q \geq 0}(1-t^i)^{\frac{-s_i}{i}}.$$

The Lefschetz numbers $\Lambda(h^k)$ are topological invariants of the
singularity of $V_f$, therefore the integers $s_1,s_2, \cdots$ are
topological invariants.

\begin{remark}
In [2], A'Campo has calculated $\Lambda(h)$ as following:

\begin{displaymath}
\Lambda(h)= \left\{ \begin{array}{ll}
           0 & \textrm{if $d_0f=0$}\\
          1 & \textrm{if $d_0f \neq 0$.}
          \end{array} \right.
\end{displaymath}
\end{remark}

This tells us that if $f$ is regular and $g$ is singular at the origin
there is no topological equivalence between germs of $V_f$ and $V_g$ at
the origin.

\begin{remark}
More generally Deligne has explained in a letter to A'Campo (see [1],
[9]) that
$$ \Lambda(h^k)=0,~~~ \mathrm{if}~~~ 0<k<~~~ \mathrm{multiplicity~~ of
~~}V_f~~~
\mathrm{at~~~the~~~ origin}. $$
\end{remark}

The Lefschetz numbers $\Lambda(h^k)$ are topological invariants of the
singularity of $V_f$, therefore the integers $s_1,s_2, \cdots$ are
topological invariants. A'Campo discovered the meaning of the
topological invariants
$s_1,s_2, \cdots$ as following:

Let $ \pi: X \rightarrow \mathbb{C}^n $ be a proper modification such
that in all
points of $ S:={\pi}^{-1}(0) $, the divisor $V'_f:={\pi}^{-1}(V_f)$ has
normal crossings. Such a local resolution of $(\mathbb{C}^n,V_f)$ at the
origin exists by the theorem of resolution of singularities due to
Hironaka [8]. For every
$m \in \mathbb{N}$, let $S_m$ be all points $s \in S$ such that the
equation of $V'_f$ at $s$ is of the form $z^m_1=0$ for a local coordinate
$z$ of $X$ at $s$ and denote by $\chi(S_m)$ the Euler-Poincar\'e
characteristic of $S_m$.
A'Campo proved that $s_m=m\chi(S_m)$. More precisely:
\begin{theorem}
One has

(1) $\Lambda(h^k)=\sum_{m \vert k}m \chi(S_m) $, $k \geq 1$,

(2) $\Lambda(h^0)=\chi(F_{\theta})=\sum_{m \geq 1}m \chi(S_m)$,

(3) $\mu =
\mathrm{dim}H^{n-1}(F_{\theta},\mathbb{C})=(-1)^{n-1}[-1+\sum_{m \geq
1}m \chi(S_m)]$.
\end{theorem}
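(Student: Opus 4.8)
The plan is to prove (1) by transporting the monodromy to the resolution $X$, exhibiting an explicit geometric representative of $h$ adapted to the normal-crossings divisor, and computing $\Lambda(h^k)$ by the Lefschetz fixed point theorem; statements (2) and (3) then follow as the case $k=0$ together with the bouquet description of Theorem (2.2). First I would use that $\pi$ is a biholomorphism off $S=\pi^{-1}(0)$ to identify the Milnor fibre $F_\theta$ of $f$ with a fibre $F=(f\circ\pi)^{-1}(\delta)$ lying in a small neighborhood of $S$, and $h$ with the monodromy of $f\circ\pi$. The gain in passing to $X$ is that $f\circ\pi$ has normal crossings: along the smooth part $E_i^\circ$ of an exceptional component $E_i$ of multiplicity $m$ one has $f\circ\pi=z_1^{m}u$ with $u$ a unit, so $S_m=\bigsqcup_{m_i=m}E_i^\circ$, whereas along a double locus $E_i\cap E_j$ one has $f\circ\pi=z_1^{m_i}z_2^{m_j}u$.

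Next I would construct, within its isotopy class, a convenient geometric representative of $h$ by lifting the rotation of the parameter circle to a flow on a neighborhood of $S$ adapted to this local structure. Over $E_i^\circ$ the fibre $\{z_1^{m}u=\delta\}$ is an $m$-fold cyclic cover of $E_i^\circ$ via $z_1\mapsto w$, with deck group generated by $z_1\mapsto e^{2\pi i/m}z_1$, and the monodromy restricted there is exactly this free deck transformation. Over a double locus I would choose the two rotation speeds $\alpha,\beta$ with $m_i\alpha+m_j\beta=1$ so that every iterate $h^k$ is a fixed-point-free screw motion. Hence $h^k$ has no fixed points over the double loci, and over $E_i^\circ$ the $k$-th power of a free order-$m$ deck transformation is fixed-point-free when $m\nmid k$ and equals the identity when $m\mid k$. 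Therefore $\mathrm{Fix}(h^k)$ is precisely the union, over those $m$ dividing $k$, of the open pieces $F|_{S_m}$, each the $m$-fold cyclic cover of $S_m$.

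On each such piece $h^k$ is the identity while remaining fixed-point-free on a collar over the neighboring double loci, so by the Lefschetz--Hopf index theorem and additivity of the fixed point index its contribution to $\Lambda(h^k)$ is the Euler characteristic of that piece; multiplicativity of $\chi$ in covers gives $\chi(F|_{S_m})=m\,\chi(S_m)$, whence $\Lambda(h^k)=\sum_{m\mid k}m\,\chi(S_m)$, which is (1). For (2) I note that $\Lambda(h^0)=\sum_{q}(-1)^q\dim H^q(F_\theta)=\chi(F_\theta)$, and decompose $F_\theta$ along the same strata: the pieces over $S_m$ contribute $m\,\chi(S_m)$ as above, while the fibre over a double locus is fibrewise a union of cylinders and hence contributes $0$, so additivity of the Euler characteristic yields $\chi(F_\theta)=\sum_{m\geq 1}m\,\chi(S_m)$. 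Finally (3) is bookkeeping: by Theorem (2.2) the fibre is homotopy equivalent to a wedge of $\mu$ spheres of dimension $n-1$, so $\chi(F_\theta)=1+(-1)^{n-1}\mu$; equating this with (2) and solving for $\mu$ gives $\mu=(-1)^{n-1}[-1+\sum_{m\geq 1}m\,\chi(S_m)]$.

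The main obstacle is the construction in the second step: one must patch the local rotational models across the double and higher-order intersections of the exceptional divisor into a single globally defined flow, and verify that the rotation speeds can be chosen so that every power $h^k$ stays fixed-point-free over all multiple loci. Granting such a representative, the identification of $\mathrm{Fix}(h^k)$ and the evaluation of its Euler-characteristic contribution become routine, and the vanishing of the corner terms in (2) reduces to $\chi(S^1)=0$.
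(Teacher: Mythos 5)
Your proposal cannot be checked against the paper itself, because the paper contains no proof of this theorem: it is stated as a quotation of A'Campo's results (references [1], [2]), following Hironaka's resolution theorem, and no proof environment follows it. Measured instead against A'Campo's original argument, your outline reproduces precisely his strategy: identify the Milnor fibre of $f$ with a fibre of $f\circ\pi$ on the resolution, construct a geometric monodromy adapted to the normal crossing divisor (the cyclic deck transformation of the $m$-fold cover over the open strata $S_m$, fixed-point-free screw motions with suitably chosen rotation speeds over the intersection loci), evaluate $\Lambda(h^k)$ by localizing the fixed point index, and obtain (3) from (2) via the bouquet theorem (Theorem 2.2). So the approach is the right one --- indeed the historically actual one.

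There are, however, two gaps. The first you acknowledge, but underestimate: patching the local rotational models into a single flow all of whose powers are fixed-point free over the multiple loci is not a routine verification; it is the technical core of A'Campo's paper, and without it the identification of $\mathrm{Fix}(h^k)$ is unproven (moreover, the Lefschetz--Hopf theorem as usually stated concerns isolated fixed points, so the step ``contribution of an open piece of fixed points $=$ its Euler characteristic'' itself requires an argument). The second gap you do not acknowledge: your stratification omits the strict transform $\widetilde{V}_f$, which is a component of $V'_f=\pi^{-1}(V_f)$ of multiplicity one. The Milnor fibre has a whole region near $\widetilde{V}_f\setminus S$, equivalently near its own boundary, and this region lies over no stratum $S_m$, since the paper defines $S_m\subset S=\pi^{-1}(0)$. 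On that region the natural lift of the rotation is the identity at time one (the multiplicity is $1$), so it is contained in $\mathrm{Fix}(h^k)$ for every $k$, contradicting your claim that $\mathrm{Fix}(h^k)$ is \emph{precisely} $\bigcup_{m\mid k}F|_{S_m}$. The formulas survive only because this region has zero Euler characteristic: it retracts onto the link $V_f\cap S^{2n-1}_{\varepsilon}$, a closed manifold of odd dimension $2n-3$. This must be stated and proved, both for (1) and for (2), where additivity over your strata alone does not exhaust $\chi(F_{\theta})$; one can see the necessity already in the paper's own Example 4.6, where the exceptional strata account for all of $\chi(F_{\theta})$. Finally, a normal crossing divisor has triple and higher intersection loci, not only double ones; the same torus-factor argument disposes of them, but it should be said.
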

Therefore the numbers $\chi(S_m)$ don't depend on the chosen resolution
and are topological invariants of the singularity.
As a consequence we have the following result that may be useful for
resolving the multiplicity conjecture.
\begin{proposition}
If $f(z)=f_k(z)+f_{k+1}(z)+ \cdots +f_{k+r}(z) $,
$g(z)=g_l(z)+g_{l+1}(z)+ \cdots +g_{l+s}(z)$ and $k+r<l$ then there is
no topological equivalence between germs of $V_f$ and $V_g$ at the
origin.
\end{proposition}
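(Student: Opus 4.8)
My plan is to bypass the refined A'Campo invariants of this section and argue entirely through the Milnor number, which by Theorem (2.3) is a topological invariant of the pair: a topological equivalence between the germs $V_f$ and $V_g$ forces equality of the numbers of vanishing cycles, i.e. $\mu_f=\mu_g$. I will then sandwich this common value between an upper bound coming from the \emph{degree} of $f$ and a lower bound coming from the \emph{multiplicity} of $g$, and show that the hypothesis $k+r<l$ makes the two bounds incompatible. Here $\deg f=k+r$ and $\mathrm{mult}(g)=l$.

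For the upper bound, observe that each partial derivative $\partial f/\partial z_i$ is a polynomial of degree at most $k+r-1$. Since $0$ is an isolated singularity, the $n$ hypersurfaces $\{\partial f/\partial z_i=0\}$ meet at an isolated point at the origin, and $\mu_f=\dim_{\mathbb C}\mathcal O_n/\langle\partial f/\partial z_1,\dots,\partial f/\partial z_n\rangle$ is precisely the local intersection multiplicity of this system at $0$. By B\'ezout's theorem the origin's contribution to the affine intersection number is at most the total B\'ezout number, namely the product of the degrees, so $\mu_f\le (k+r-1)^n$. This step is routine; the only care needed is that a partial vanishing identically is excluded (it would make the singularity non-isolated), and that intersections away from the origin only lower the origin's share.

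For the lower bound I will invoke the classical inequality $\mu\ge(\mathrm{mult}-1)^n$ for an isolated hypersurface singularity, which for $g$ reads $\mu_g\ge(l-1)^n$. Combining the three facts gives $(l-1)^n\le\mu_g=\mu_f\le(k+r-1)^n$, whence $l\le k+r$, contradicting $k+r<l$; so no topological equivalence can exist. I expect this lower bound to be the real obstacle: it is not elementary and rests on Teissier's log-convexity of the $\mu^{*}$-sequence $\mu^{(0)}=1,\ \mu^{(1)}=l-1,\dots,\mu^{(n)}=\mu_g$, which yields $\mu^{(i)}/\mu^{(i-1)}\ge\mu^{(1)}/\mu^{(0)}=l-1$ for every $i$ and hence $\mu_g\ge(l-1)^n$. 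Establishing or citing this inequality is the crux; note the bounds are sharp for homogeneous germs, which is a good consistency check.

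Finally, a remark on the route suggested by the surrounding material. One is tempted to use Deligne's Remark instead: topological equivalence equates all Lefschetz numbers $\Lambda(h^j)$, and since $\mathrm{mult}(g)=l$ these vanish for $0<j<l$, so by A'Campo's formula and M\"obius inversion $\chi(S_m(f))=0$ for all $m\le k+r$. To reach a contradiction this way one would need $f$ to carry a \emph{nonzero} invariant $\chi(S_m(f))$ in the range $m\le k+r$. That can fail --- for an ordinary double point every $\Lambda(h^j)$ with $j\ge1$ already vanishes --- so the A'Campo invariants alone do not close the argument, and this is exactly why I prefer to run the proof through $\mu$ and the two extremal inequalities above.
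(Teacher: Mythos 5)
Your proof is correct, but it follows a genuinely different route from the paper's. The paper stays inside the A'Campo--Deligne machinery of this section: topological invariance of the Lefschetz numbers together with Deligne's vanishing (Remark 4.2) is used to assert that $\Lambda(h_1^j)=\Lambda(h_2^j)=0$, hence $s_j=s'_j=0$, for \emph{every} $j\ge 1$; Theorem 4.3 then gives $\mu_f=\mu_g=(-1)^{n-1}[-1+\sum_{j\ge 1}s_j]=(-1)^n$, which is impossible for $n$ odd, and for $n$ even forces $\mu=1$, hence $k=l=2$, contradicting $k+r<l$. You instead use only L\^e's invariance of the Milnor number (Theorem 2.3) and trap the common value: $(l-1)^n\le\mu_g=\mu_f\le(k+r-1)^n$, the upper bound by B\'ezout applied to the partial derivatives of the polynomial $f$ (degrees at most $k+r-1$, isolated common zero at $0$), the lower bound by Teissier's inequality $\mu\ge(\mathrm{mult}-1)^n$ --- which, incidentally, you could equally well quote from the paper's own appendix, whose two remarks give $\mu_g=(l-1)^n$ or $\mu_g>(l-1)^n$ according as the leading form of $g$ does or does not have an isolated singularity at $0$. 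Both of your inputs are genuine theorems and the arithmetic closes the argument. The comparison is instructive: the paper's route needs no intersection theory and no Teissier, but its key step --- that the vanishing of the Lefschetz numbers in the range $0<j<l$ (which is all that invariance plus Deligne's remark actually supply) propagates to all $j$ --- is asserted rather than proved, and your closing remark pinpoints exactly this weakness; your double-point example (correct for $n$ even, while for $n$ odd one has $\Lambda(h^j)=2$ for even $j$) shows the Lefschetz data can vanish identically, so the propagation is not automatic. Your two-sided bound on $\mu$ is immune to that difficulty, at the cost of invoking Teissier's deep (but standard) inequality.
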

\begin{proof}
Let $h_1$ and $h_2$ be the monodromies associated to $f$ and $g$ and
$s_1,s_2, \cdots$ and $s'_1,s'_2, \cdots$ the two related sequences of $f$
and $g$ respectively as above. If there exists such an equivalence then
$\Lambda(h_1^j)=\Lambda(h_2^j)=0$ and $s_j=s'_j=0$ for every $j$. Hence
$\mu_f=\mu_g=(-1)^{n-1}[-1+\sum_{j \geq 1}s_j]=(-1)^n$. If $n$ is odd
this is impossible and if $n$ is even, then $\mu_f=\mu_g=1$. In this
case $k=l=2$. Contradiction!
\end{proof}
The second result is the following [1], [9]:
\begin{theorem}
Given two germs of hypersurfaces $V_f$ and $V_g$. Let
$\mathbb{P}C(V_f)$, respectively $\mathbb{P}C(V_g)$, denote the
projectivized tangent cone which is a subvariety of
$\mathbb{C}P^{n-1}$. If $\chi(\mathbb{C}P^{n-1} \setminus
\mathbb{P}C(V_f)) \neq 0$ and $\chi(\mathbb{C}P^{n-1} \setminus
\mathbb{P}C(V_g)) \neq 0$, then topological equisingularity of $V_f$
and $V_g$ implies $m_f=m_g$.
\end{theorem}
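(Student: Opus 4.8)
The plan is to detect the multiplicity $m_f$ as the smallest index $m$ for which the topological invariant $s_m = m\chi(S_m)$ is nonzero, and then to exploit that these invariants coincide for $f$ and $g$. First I would record, from A'Campo's theorem (4.1) and the formula $\Lambda(h^k)=\sum_{m\mid k}s_m$, that the integers $s_m=m\chi(S_m)$ are topological invariants of the singularity; hence if $V_f$ and $V_g$ are topologically equisingular then $s_m=s'_m$ for every $m\ge 1$, equivalently $\chi(S_m^f)=\chi(S_m^g)$.

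Next I would invoke Deligne's vanishing (Remark 4.3), $\Lambda(h^k)=0$ for $0<k<m_f$, to deduce by induction that $s_m=0$ for all $1\le m<m_f$: indeed $s_1=\Lambda(h)=0$, and if $s_m=0$ for all $m<k$ with $k<m_f$, then $s_k=\Lambda(h^k)-\sum_{m\mid k,\,m<k}s_m=0$. The identical argument applies to $g$, giving $s'_m=0$ for $1\le m<m_g$.

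The geometric heart of the proof is to identify $\chi(S_{m_f})$ with $\chi(\mathbb{C}P^{n-1}\setminus \mathbb{P}C(V_f))$. Here I would choose the normal crossings resolution $\pi\colon X\to\mathbb{C}^n$ to factor through the blow-up of the origin, whose exceptional divisor $E_1\cong\mathbb{C}P^{n-1}$ enters the total transform $V'_f$ with multiplicity exactly $m_f$. Writing $f=f_{m_f}+f_{m_f+1}+\cdots$ and computing in the standard affine charts shows that the strict transform of $V_f$ meets $E_1$ precisely along $\{f_{m_f}=0\}=\mathbb{P}C(V_f)$; since $0$ is an isolated singularity, the strict transform is smooth away from $E_1$, so every further blow-up needed to reach normal crossings is centered inside $\mathbb{P}C(V_f)$. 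Consequently the open set $E_1\setminus\mathbb{P}C(V_f)$ is left untouched and consists of smooth points of $V'_f$ of multiplicity $m_f$, whence $E_1\setminus\mathbb{P}C(V_f)\subseteq S_{m_f}$. Conversely, every later exceptional divisor has multiplicity strictly greater than $m_f$, because its multiplicity is the sum of the multiplicities of the components of $V'_f$ passing through the center, and any such center lies in an intersection of at least two components, one of which already carries multiplicity $\ge m_f$. Hence $S_{m_f}=E_1\setminus\mathbb{P}C(V_f)$ and $\chi(S_{m_f})=\chi(\mathbb{C}P^{n-1}\setminus\mathbb{P}C(V_f))$.

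Finally, the hypothesis $\chi(\mathbb{C}P^{n-1}\setminus\mathbb{P}C(V_f))\neq 0$ yields $s_{m_f}=m_f\chi(S_{m_f})\neq 0$, so $m_f=\min\{m\ge 1 : s_m\neq 0\}$; the hypothesis on $g$ gives likewise $m_g=\min\{m : s'_m\neq 0\}$. As $s_m=s'_m$ for all $m$, these two minima agree and $m_f=m_g$. I expect the main obstacle to be the resolution bookkeeping of the third paragraph, namely verifying that all subsequent blow-up centers lie over $\mathbb{P}C(V_f)$ and that each new exceptional divisor acquires multiplicity exceeding $m_f$, since this is exactly what pins down $S_{m_f}$ as the untouched open stratum of $E_1$.
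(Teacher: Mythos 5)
Your proposal is correct and takes essentially the same approach as the paper: the paper's proof consists of the single observation that, under the hypothesis $\chi(\mathbb{C}P^{n-1}\setminus\mathbb{P}C(V_f))\neq 0$, A'Campo's theorem yields $m_f=\inf\{s\in\mathbb{N}\,\vert\,\Lambda(h^s)\neq 0\}$, so that topological invariance of the Lefschetz numbers forces $m_f=m_g$. Your argument is exactly this, with the details the paper delegates to A'Campo and Karras filled in --- Deligne's vanishing for $k<m_f$, and the identification $S_{m_f}=E_1\setminus\mathbb{P}C(V_f)$ via a resolution factoring through the blow-up of the origin --- and your bookkeeping of the later exceptional divisors' multiplicities is sound.
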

The key point of the proof is that: if $\chi(\mathbb{C}P^{n-1}
\setminus \mathbb{P}C(V_f)) \neq 0$ then by theorem (4.3),
$m_f=inf\{s \in \mathbb{N}| \Lambda(h^s)\neq 0 \}$.

It is unknown whether $\chi(\mathbb{C}P^{n-1} \setminus
\mathbb{P}C(V_f))$ is a topological invariant or not.

\begin{example}
Let $g=z_1^l+z_2^l + \cdots + z_n^l$, $V_g=g^{-1}(0) \subset \mathbb{C}^n$ and
$C(V_g) \subset \mathbb{C}P^{n-1}$ and $F_{\theta}$ be the fibre of the
Milnor fibration of $g$ at the origin. By (6.1) in the appendix we have
$$\mu_g=(l-1)^n.$$
With one blowing up at the origin, the singularity of $g$ may be
resolved and then we may apply the theorem (4.3): $S=\mathbb{C}P^{n-1}$
and
\begin{displaymath}
S_m=\left\{ \begin{array}{ll}
\phi & \textrm{if $m \neq l$}\\
\mathbb{C}P^{n-1} \setminus V_g & \textrm{if $m=l$.}
              \end{array}\right.
\end{displaymath}

By theorem (4.3),
$$\mu_g=(-1)^{n-1}[-1+ l\chi(S_l)].$$
The numbers $\chi(S_l)$ and $\chi (C(V_g))$ are related by
$$ \chi (S_l)+\chi (C(V_g))= \chi (\mathbb{C} P^{n-1})=n.$$ Therefore
we obtain the following well known formula
$$\chi (C(V_g))=n-\frac{1-(1-l)^n}{l} .$$
\end{example}
\begin{example}
Let $ \mathcal{A} $ be the set of all holomorphic
functions $g$ such that $0 \in \mathbb{C}^{n}$ is an isolated
singularity for the first nonzero homogeneous part of the Taylor
expansion of $g$. Then by an argument (see (6.3) in the appendix) the
origin is an isolated singularity of $g$. Let $g \in \mathcal{A}$ with
algebraic multiplicity $l$ and the leading term $g_l$. Since $g$ and
$g_l$ have the same projevtivized tangent cones and $V_{g_l}$ is
topologically equivalent to $V_{z_1^l+z_2^l + \cdots + z_n^l}$ then by
the previous example
$$ \chi(\mathbb{C}P^{n-1} \setminus
\mathbb{P}C(V_g))=\frac{1-(1-l)^n}{l} \neq 0 .$$
Hence by theorem (4.5) any topological equivalence between two elements
of $ \mathcal{A} $ preserves multiplicities.

Still it is unknown whether there is any topological equivalence between
$g \in \mathcal{A}$ and $f \notin \mathcal{A}$. By contrary if there
exists such an equivalence
then $k < l $, where $k$ and $l$ are the multiplicities of $f$ and $g$
respectively. The reason is that by (6.2) the Milnor number $\mu_f>
(k-1)^n$ and $\mu_g=(l-1)^n$ and by theorem (4.3), Milnor number is a
topological invariant. Therefore it remains to show that: Let
$g=z_1^l+z_2^l + \cdots + z_n^l$ and $f=f_k+ \cdots + f_{k+r}$ with $k
< l$ and
$ k+r \geq l $ then the germs $V_f$ and $V_g$ at the origin are not
topologically equisingular.
\end{example}

\section{On the deformation of complex hypersurfaces}
Let us, instead of dealing with a pair of hypersurfaces, consider
families of hypersurfaces, $V_{f_{t}}$, all having an isolated singular
point at the origin and depending continuously in
$t \in [0,1]$
and $f_0=f$ and $f_1=g$. We denote by $C(V_{f_{t}})$, the tangent cone
at $0$ of $V_{f_{t}}$, that is, the zero set of the initial polynomial
of $f_{t}$.
H. King generalized theorem (2.4) as follows [11]:
\begin{theorem}
Suppose $f_{t}: (\mathbb{C}^{n},0) \rightarrow (\mathbb{C},0)$, $t \in
[0,1]$ is a continuous family of holomorphic germs with the same Milnor
number and $n \neq 3$. Then there is a continuous
family of germs of homeomorphisms $h_t:(\mathbb{C}^{n},0) \rightarrow
(\mathbb{C}^{n},0) $ so that
$f_0=f_t \circ h_t$
\end{theorem}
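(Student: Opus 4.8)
The plan is to promote the L\^e--Ramanujam theorem (2.4) from a statement about the individual members of the family to a statement about the family as a whole, and at the same time to promote $V$-equivalence to the right-equivalence $f_0=f_t\circ h_t$. For each fixed $t$ the hypothesis $\mu(f_t)=\mu(f_0)$ together with theorem (2.4) already produces a homeomorphism witnessing constancy of the topological type; the real content here is (a) to match function values, not merely zero sets, and (b) to make the resulting homeomorphisms depend continuously on $t$. First I would fix a uniform geometric model: by compactness of $[0,1]$ and continuity of $(z,t)\mapsto f_t(z)$, choose radii $\varepsilon,\delta>0$ that serve simultaneously for all $t$, so that for every $t$ the map $f_t\colon B_\varepsilon\cap f_t^{-1}(\partial D_\delta)\to\partial D_\delta$ is the Milnor fibration and the Milnor tube $f_t\colon B_\varepsilon\cap f_t^{-1}(D_\delta)\to D_\delta$ is locally trivial over $D_\delta\setminus\{0\}$. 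Since $\mu$ is constant, all Milnor fibers are bouquets of $\mu$ spheres of dimension $n-1$ and hence share a homotopy type.

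Next I would trivialize, over the parameter, the family of maps $(z,t)\mapsto(f_t(z),t)$ rather than just the family of hypersurfaces; a trivialization of this family of maps is exactly a continuous family of right-equivalences $f_0=f_t\circ h_t$. Constancy of $\mu$ forces the inclusions of the end fibers into the piece of total space lying over a short parameter arc to be homotopy equivalences, so that (the fibers being simply connected, as they are $(n-2)$-connected for $n\ge 3$) this piece is an $h$-cobordism rel boundary between consecutive Milnor fibers. The Milnor fiber has real dimension $2n-2$, so the cobordism has dimension $2n-1$; the $h$-cobordism theorem therefore applies exactly when $2n-1\ge 6$, i.e. $n\ge 4$. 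The value $n=2$ is covered by the classical equisingularity theory of plane curves, while the excluded case $n=3$ is precisely the one in which the cobordism is five-dimensional and the $h$-cobordism theorem is unavailable, which is the origin of the hypothesis $n\ne 3$. The $h$-cobordism theorem yields a product structure, hence a fibered homeomorphism over $\partial D_\delta$; extending it inward over $D_\delta\setminus\{0\}$ and over the central fiber by coning, and using the cone structure over the link recalled in Section 3, produces a germ of homeomorphism of $(\mathbb{C}^n,0)$ fixing the origin and respecting the tube, i.e. satisfying $f_0=f_t\circ h_t$.

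The main obstacle is continuity in $t$: the $h$-cobordism theorem is invoked pair by pair and, taken naively, controls each $h_t$ in isolation but nothing about its dependence on $t$. To force continuity I would work locally and then patch. Subdivide $[0,1]$ into finitely many subintervals so short that over each one the uniform fibration above makes the family of maps a trivial fibration; over a single short subinterval one then has a \emph{parametrized} $h$-cobordism, and the relative product-structure form of the $h$-cobordism theorem yields a trivialization $h_t$ depending continuously on $t$ throughout that subinterval. One finally glues the trivializations of adjacent subintervals at their common parameter value: there the two candidate homeomorphisms of the common Milnor fiber differ by a self-homeomorphism isotopic to the identity, and this discrepancy is absorbed by the isotopy extension theorem, which is exactly what renders the patched family $h_t$ continuous across the subdivision points. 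Performing these isotopy-extension corrections compatibly with the coning, so that they fix the origin and preserve the tube fibration, is the delicate point on which the whole argument rests; everything else is the machinery already present in theorem (2.4).
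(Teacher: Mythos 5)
First, a point of order: the paper does not prove this statement at all. It is quoted as H.~King's theorem, with reference [11] (Invent. Math. 62 (1980/81)), so there is no internal proof to compare your attempt against; what you have written must be judged as a reconstruction of King's result. Your skeleton is the right lineage --- King's theorem is a generalization of the L\^e--Ramanujam theorem (2.4), and the uniform Milnor radius, the h-cobordism between nearby Milnor fibers, the exclusion of $n=3$ (where the relevant cobordism is too low-dimensional and the link need not be simply connected), and the separate treatment of $n=2$ are all genuinely part of that circle of ideas. The problem is that everything this theorem adds on top of (2.4) --- the continuity of $h_t$ in $t$, for a family that is itself only \emph{continuous} in $t$ --- is exactly where your argument has gaps rather than proofs.

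Two concrete failures. (1) The step ``subdivide $[0,1]$ into subintervals so short that the family of maps becomes a trivial fibration over each one'' is circular: local triviality of the family of map germs near the singular point \emph{is} the statement being proved, and no compactness or fineness of subdivision yields it for free. Away from the singularity one can indeed trivialize by Ehresmann/Thom-type arguments, but at the singular point the only input is constancy of $\mu$, and converting that into triviality is the theorem itself. (2) The tool you invoke to restore continuity --- a ``parametrized h-cobordism theorem'' or ``relative product-structure form of the h-cobordism theorem'' giving product structures that vary continuously in $t$ --- does not exist as an off-the-shelf result. The h-cobordism theorem is a one-cobordism-at-a-time statement; continuous families of product structures are the subject of pseudo-isotopy theory (Cerf, Hatcher--Wagoner), and the connectivity of the space of product structures that your patching argument needs is far deeper than anything you may cite casually. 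Relatedly, at the gluing step the discrepancy between two trivializations at a common parameter value is a self-homeomorphism of the Milnor fiber that your argument only knows to be \emph{homotopic} to the identity; the isotopy extension theorem extends ambient isotopies of embeddings, it does not upgrade homotopic-to-identity to isotopic-to-identity, so the patching does not go through as stated. (A smaller point in the same direction: since $t\mapsto f_t$ is merely continuous, there is no smooth total space over the parameter arc; one must first use Cauchy estimates to upgrade $C^0$-dependence to $C^\infty$-dependence on compact sets before any smooth-category tool can be applied.) These are not finishing touches: organizing the construction so that the homeomorphisms arise from one coherent geometric procedure, rather than from fiberwise applications of the h-cobordism theorem, is precisely the content of King's paper [11].
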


Now we have the following result:\\
\begin{theorem}
If for every $t_0 \in [0,1]$ there exist a neighborhood $I_{t_0}$ of
$t_0$ in $[0,1]$ and a line $L_{t_0}$ through 0 in $\mathbb{C}^{n}$
such that $L_{t_0} \cap C(V_{f_{s}})=\{ 0 \}$ for
$s \in I_{t_0}$, then topological equisingularity of the family implies
equimultiplicity provided that $ n \neq 3 $.
\end{theorem}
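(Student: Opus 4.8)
The hypothesis that the family is topologically equisingular means that the germs $V_{f_s}$ are pairwise topologically V-equivalent. By Theorem~2.3 the number of vanishing cycles, hence the Milnor number $\mu(f_s)$, is independent of $s$. Since $n\neq 3$, King's Theorem~5.1 then produces a continuous family of germs of homeomorphisms $h_t:(\mathbb{C}^n,0)\to(\mathbb{C}^n,0)$, which we may take with $h_0=\mathrm{id}$, satisfying $f_0=f_t\circ h_t$. Thus the family is topologically right-trivial, and what remains is to show that the algebraic multiplicity $m(t):=m_{f_t}$ is constant.

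First I would reduce to a local statement. By compactness of $[0,1]$ the hypothesis yields finitely many open intervals $I_{t_1},\dots,I_{t_N}$ covering $[0,1]$, on each of which a single line $L_{t_j}$ satisfies $L_{t_j}\cap C(V_{f_s})=\{0\}$ for all $s\in I_{t_j}$. As $[0,1]$ is connected, it suffices to prove $m(\cdot)$ is constant on each $I_{t_j}$. Fix such an interval $I$ with line $L$ and, after a linear change of coordinates, take $L$ to be the $z_1$-axis; then $f_s(z_1,0,\dots,0)$ has order exactly $m(s)$ in $z_1$ for every $s\in I$. Since the Taylor coefficients of $f_s$ vary continuously with $s$, the integer $m(s)$ is upper semicontinuous on $I$, and the point is to show that it cannot drop.

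Fix a base point $s_0\in I$, put $\varphi_s=h_s\circ h_{s_0}^{-1}$ and $\psi_s=\varphi_s^{-1}$, so that $f_{s_0}=f_s\circ\varphi_s$ and $\varphi_{s_0}=\mathrm{id}$. I would apply Theorem~3.2 to the pair $(f_{s_0},f_s)$ along the $z_1$-axis, which is transverse to both $C(V_{f_{s_0}})$ and $C(V_{f_s})$. Condition (ii) holds automatically with $C=D=1$: from $f_{s_0}=f_s\circ\varphi_s$ one gets $|f_{s_0}(w)|=|f_s(\varphi_s(w))|$, and composing with $\psi_s$ gives $|f_s(w)|=|f_{s_0}(\psi_s(w))|$ for all $w$. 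Hence the whole content is condition (i): the existence of sequences $w_m,w'_m\to 0$ on the axis with $|\varphi_s(w_m)|\le B|w_m|$ and $|\psi_s(w'_m)|\le A|w'_m|$. The first bound merely reproves $m(s)\le m(s_0)$, already in hand from semicontinuity; the essential one is the second, which yields the reverse inequality $m(s_0)\le m(s)$ and so forbids the drop.

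The hard part is exactly this estimate $|\psi_s(w'_m)|\le A|w'_m|$ on the axis, and it is not formal: a homeomorphism can expand every ray (for example $z\mapsto z|z|^{-1/2}$), and such expansion of $L$ by $\psi_s$ is precisely what would accompany a genuine fall in multiplicity. This is the step where $\mu$-constancy, and not merely continuity of the family, must be used. Indeed, if $m$ dropped at $s_0$ the identity $f_s|_L=(f_{s_0}\circ\psi_s)|_L$ forces the curve $\psi_s(L)$ either to approach $0$ faster than linearly or to meet $C(V_{f_{s_0}})$ tangentially; my plan is to use King's $\mu$-constant trivialisation, together with the fact that $L$ stays transverse to $C(V_{f_s})$ for all $s\in I$, to exclude both and thereby secure the linear bound. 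Converting the $C^0$-triviality of the family into estimate (i) on the fixed transverse axis is the crux of the argument; granting it, Theorem~3.2 gives $m(s)=m(s_0)$ throughout $I$, and the finite cover then propagates equimultiplicity over all of $[0,1]$.
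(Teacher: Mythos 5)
You set up the proof exactly as the paper does at the start --- L\^e's theorem gives $\mu$-constancy, King's Theorem 5.1 gives the continuous trivialization $f_{s_0}=f_s\circ\varphi_s$, and compactness plus connectedness reduces everything to local constancy of $m(s)$ along a fixed line $L$ transverse to the tangent cones --- but your argument then stalls precisely where you say it does. The estimate $|\psi_s(w'_m)|\le A|w'_m|$ along a sequence on the axis is never established; you only announce a ``plan'' to deduce it from $\mu$-constancy and transversality, with no actual argument. The gap is real, not cosmetic: for a fixed $s$, the continuity of King's family only gives you a uniform $C^0$-bound $|\psi_s(z)-z|<\varepsilon$, which says nothing as $|z|\to 0$ when compared with the required linear bound $|\psi_s(z)|\le A|z|$; and, as your own example $z\mapsto z|z|^{-1/2}$ shows, homeomorphisms need not satisfy any such bound. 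So what you have is an incomplete proof whose missing step is of essentially the same difficulty as the statement you are trying to prove.

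The paper's proof avoids this obstacle altogether, and the contrast is instructive: it never attempts metric control of the homeomorphism. Instead it uses the fact that King's trivialization is a \emph{continuous family}, so that $\varphi_{t_0s}=\varphi_{t_0}^{-1}\circ\varphi_s$ tends to the identity uniformly on a fixed ball as $s\to t_0$. Combined with uniform continuity of $f_{t_0}$, this yields a function-level estimate: $|f_{t_0}(z)-f_s(z)|<\min_{u\in S_\delta}|f_{t_0}(u)|\le |f_{t_0}(z)|$ for all $z$ on a small circle $S_\delta$ inside the line $L_{t_0}$, for $s$ close to $t_0$. Rouch\'e's theorem, applied to the one-variable restrictions $f_{t_0}|_{L_{t_0}}$ and $f_s|_{L_{t_0}}$, then gives equality of the number of zeros in the disc, and the hypothesis $L_{t_0}\cap C(V_{f_s})=\{0\}$ identifies the order of vanishing at $0$ of $f_s|_{L_{t_0}}$ with $m_{f_s}$; equality of multiplicities follows, and local constancy propagates over $[0,1]$. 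Note that your scheme fixes $s$ and works with the single homeomorphism $\varphi_s$, thereby discarding exactly the feature --- closeness of $\varphi_{t_0s}$ to the identity for $s$ near $t_0$ --- that powers the paper's argument. Theorem 3.2 is the natural tool for a single pair of topologically equivalent germs, where no such deformation parameter exists; for a continuous family, the paper's Rouch\'e comparison is available and entirely bypasses the metric estimate on which your proposal founders.
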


\begin{proof}
By theorem (5.1) there exists a continuous family of homeomorphisms
$\varphi_t$ such that
$f_t=f \circ \varphi_t$. Therefore for every $t_0 \in [0,1]$ we may write
$$f_s= f_{t_0} \circ \varphi_{{t_0}s}$$
where $\varphi_{{t_0}s}= \varphi^{-1}_{t_0} \circ \varphi_s $.
Since $f_{t_0}$ is uniformly continuous on a compact small ball $B_r
\subset \mathbb{C}^{n}$ around $ 0 $, there exists
$ \eta > 0$ such that, for any $z, w \in B_r$,
$$\vert z- w \vert < \eta~~~\Longrightarrow
\vert f_{t_0}(z)- f_{t_0}(w) \vert < {\min}_{u \in S_\delta} \vert
f_{t_0}(u) \vert ,$$
where $S_\delta$ is the boundary of $\overline{D_\delta}$, the closed disc
with radius
$\delta < r/2 $ in $L_{t_0}$ around 0. Let $\varepsilon:= \mathrm{min}
\{ \eta , \delta \}$.
By continuity of $\varphi_s$, if $I_{t_0}$ is sufficiently small then
$\vert \varphi_{{t_0}s}(z) - z \vert < \varepsilon $ for $s \in
I_{t_0}$. Then for all $z$ in the closed ball $B_\delta \subset
\mathbb{C}^{n}$, $\varphi_{{t_0}s}(z) \in B_r$ and
$$\vert f_{t_0}(z)- f_{t_0} \circ {\varphi}_{t_0s}(z)  \vert <
{\min}_{u \in S_\delta} \vert f_{t_0}(u) \vert.$$
In particular for all $z \in S_\delta$ we have
$$ \vert  f_{t_0}(z)- f_s(z) \vert < \vert f_{t_0}(z) \vert, ~~~
\mathrm{for}~~~ s \in I_{t_0}.$$
By hypothesis $L_{t_0} \cap C(V_{f_{s}})=\{ 0 \}$ for $s \in I_{t_0}$,
then $m_{f_s}$ is the order at 0 of $f_s \arrowvert_{L_{t_0}} $ for $s
\in I_{t_0}$. By theorem (1.6) in [12, Ch.VI],
$f_{t_0} {\arrowvert}_{L_{t_0}}$ and $f_s \arrowvert_{L_{t_0}}$ have
the same number of zeros, counted with their multiplicities in the
interior of $\overline{D_\delta}$. As
$f_{t_0} {\arrowvert}_{L_{t_0}}$ and $f_s \arrowvert_{L_{t_0}}$ vanish
only at 0 on $\overline{D_\delta}$, the orders at 0 of $f_{t_0}
{\arrowvert}_{L_{t_0}}$ and $f_s \arrowvert_{L_{t_0}}$ are equal. So
$m_{f_{t_0}}=m_{f_s}$ for $s \in I_{t_0}$. This tells us that the multiplicity
of the deformation is constant.
\end{proof}
\section{Topological invariants for holomorphic vector fields}
Let $ U \subset \mathbb{C}^n $ be an open neighborhood of $ 0 \in
\mathbb{C}^n $ and $X: U \rightarrow \mathbb{C}^n$,
$X(0)=0$, a holomorphic vector field with a singularity at $ 0 \in
\mathbb{C}^n $. The integral curves of $X$ are complex curves i.e.
Riemann surfaces parametrized locally as the solutions of the
differential equation
$$ \dfrac{dx}{dt}=X(x),~ x \in U,~ t \in \mathbb{C}. $$
These curves define a complex one dimensional foliation $
\mathcal{F}=\mathcal{F}_X $ with singularity at
$ 0 \in \mathbb{C}^n $. We define the \textit{algebraic multiplicity}
of $X$ as the degree of its first nonzero jet, i.e. $m=m_X$ where
$$ X = X_m + X_{m+1}+ \cdots $$
is the Taylor series of $X$ and $ X_m $ is not identically zero.

In analogy with the case of hypersurfaces we define the \textit{Milnor
number} of the vector field $X$ at
$ 0 \in \mathbb{C}^n $ as
$$ \mu= \mathrm{dim}_{\mathbb{C}}\dfrac{\mathcal{O}_n}{<X_1, \cdots ,X_n>}$$
where $\mathcal{O}_n $ is the ring of germs of holomorphic functions at
$ 0 \in \mathbb{C}^n $ and \\
$ <X_1, \cdots ,X_n> $ is the ideal generated by the germs at $ 0 \in
\mathbb{C}^n $ of the coordinate functions of $X$.
This number is finite if and only if $0 \in \mathbb{C}^n$ is an
isolated singularity of $X$, a hypothesis which we will assume from now
on.

We say $\mathcal{F}_X$ is topologically equivalent with
$\mathcal{F}_{X^{\prime}}$, $X^{\prime}$ is a holomorphic vector field
defined in a neighborhood $U^{\prime}$ of $0 \in \mathbb{C}^n$, if
there is a homeomorphism
$\varphi : U \rightarrow U^{\prime} $ fixing the origin (singularity)
and sending every leaf of the foliation $\mathcal{F}_X$ into a leaf of
the foliation $\mathcal{F}_{X^{\prime}}$.

A similar question is the following: is $ m_X $ a topological invariant
of the foliation $\mathcal{F}_X$?

In a remarkable work [5], C. Camacho, A. Lins Neto and P. Sad deal with
this problem. First of all they prove the following result:
\begin{theorem}
The Milnor number of a holomorphic vector field $X$ as above is a
topological invariant provided that $n \geq 2$.
\end{theorem}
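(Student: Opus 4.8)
The plan is to convert the purely algebraic quantity $\mu$ into a differential--topological degree and then to argue that this degree can be read off from the foliation alone. First I would recall the classical identification of the local algebra dimension with a mapping degree: viewing $X=(X_1,\dots,X_n)$ as a holomorphic map germ $(\mathbb{C}^n,0)\to(\mathbb{C}^n,0)$ with $0$ isolated in $X^{-1}(0)$, the colength $\dim_{\mathbb{C}}\mathcal{O}_n/\langle X_1,\dots,X_n\rangle$ equals the local intersection multiplicity, hence the geometric number of preimages of a generic small value, hence the topological degree of the Gauss map
$$\hat{X}\colon S^{2n-1}_{\epsilon}\longrightarrow S^{2n-1},\qquad \hat{X}(z)=\frac{X(z)}{\vert X(z)\vert}.$$
Since $X$ is holomorphic it is orientation preserving and every local contribution is $+1$-weighted, so this degree is a positive integer equal to the Poincar\'e--Hopf index $\mathrm{ind}(X,0)$ of $X$ regarded as a real vector field on $\mathbb{R}^{2n}$. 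Thus $\mu=\mathrm{ind}(X,0)$, and it suffices to prove that this index depends only on the topological type of $\mathcal{F}_X$.

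The next point is that the degree does not depend on the chosen generating field: if $h(0)\neq0$ then $hX$ defines the same foliation and the same ideal, and $\widehat{hX}=(h/\vert h\vert)\,\hat{X}$ differs from $\hat{X}$ by the phase $h/\vert h\vert\colon S^{2n-1}_{\epsilon}\to S^1$, which is null-homotopic for $n\geq2$; hence $\mu$ is already an invariant of the germ of foliation. The canonical datum attached to $\mathcal{F}_X$ that I would transport is the projectivized Gauss map
$$P_X\colon S^{2n-1}_{\epsilon}\longrightarrow \mathbb{C}P^{n-1},\qquad P_X(z)=[X(z)],$$
sending $z$ to the complex tangent line of the leaf through $z$; this map is determined by $\mathcal{F}_X$ with no choice involved. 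The plan is to recover $\mu$ as a Hopf-type homotopy invariant of $P_X$ (for $n=2$ the classical Hopf invariant of a map $S^3_{\epsilon}\to S^2$, and in general an analogous secondary characteristic number of the leafwise direction field), so that $\mu$ becomes a homotopy invariant of the tangent-line data carried on the sphere.

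Finally I would use the leaf-preserving homeomorphism $\varphi$ to transport the invariant. Since $\varphi$ carries each leaf of $\mathcal{F}_X$ onto a leaf of $\mathcal{F}_{X'}$ and fixes $0$, it takes the trace of $\mathcal{F}_X$ on a small sphere onto the trace of $\mathcal{F}_{X'}$ on the image; after normalizing $\varphi$ radially, and invoking that a topological equivalence may be arranged to respect small spheres, one aims at a homotopy $P_{X'}\circ\varphi\simeq P_X$ between the two projectivized Gauss maps, whence their Hopf-type invariants, and therefore $\mu_X$ and $\mu_{X'}$, coincide. A possible reversal of leaf orientation by $\varphi$ only replaces $X'$ by $\overline{X'}$, which leaves $\dim_{\mathbb{C}}$ of the local algebra unchanged. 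In dimension $n=2$ one can instead bypass this by Seidenberg resolution, since the dual graph of the exceptional divisor together with the reduced singularity data is a topological invariant of the foliation germ and determines $\mu$.

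The genuine difficulty is the last step, and I expect it to be the main obstacle. Topological equivalence of foliations is a very weak relation: $\varphi$ is merely a homeomorphism, so it is not differentiable and cannot be fed into a degree computation directly, and it preserves \emph{unparametrized} leaves but neither the vector field $X$ nor a priori any family of spheres transverse to the foliation. Arranging $\varphi$ to respect the sphere and the leafwise direction field \emph{simultaneously}, so that $P_{X'}\circ\varphi$ is genuinely homotopic to $P_X$, is the crux; for general $n\geq2$ no resolution theory is available to sidestep it, and this homotopy-transport of the Gauss-map invariant is the essential mechanism that must be carried out with care.
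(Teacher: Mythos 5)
Your preparatory reductions are correct and well chosen: the identification $\mu=\mathrm{ind}(X,0)=\deg\bigl(X/\vert X\vert\colon S^{2n-1}_{\epsilon}\to S^{2n-1}\bigr)$ is classical; independence of the generator (a unit factor $h$ changes neither the ideal $\langle X_1,\dots,X_n\rangle$ nor, for $n\geq 2$, the homotopy class of the Gauss map); and since the Hopf fibration gives $\pi_{2n-1}(\mathbb{C}P^{n-1})\cong\pi_{2n-1}(S^{2n-1})\cong\mathbb{Z}$, the class of $P_X$ does recover $\mu$. But the proof stops exactly where the theorem begins. The step ``one aims at a homotopy $P_{X'}\circ\varphi\simeq P_X$'' is not a step at all: a topological equivalence $\varphi$ is merely a homeomorphism preserving leaves as sets; it induces no action whatsoever on tangent lines, so there is no map, functor, or naturality statement through which leaf-preservation could be converted into a homotopy of the two tangent-direction maps. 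Given your first three reductions, the assertion ``topological equivalence preserves the homotopy class of $P_X$'' is \emph{equivalent} to the theorem, so the proposal in effect replaces the theorem by an equivalent unproven claim. You say this yourself ("the essential mechanism that must be carried out with care"), which makes this an honest research plan but not a proof. For comparison, the paper itself offers no proof either: Theorem 6.1 is quoted from Camacho--Lins Neto--Sad [5], where the invariance is established by an argument that genuinely uses the geometry of the holomorphic foliation rather than a formal transport of the Gauss map through $\varphi$.

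The fallback you offer for $n=2$ is also circular. Topological invariance of the Seidenberg resolution data (the dual graph of the exceptional divisor together with the nature of the reduced singularities) is not an available fact: in [5] equidesingularization is proved only for generalized curves, and even the weaker statement that the algebraic multiplicity is a topological invariant is precisely the delicate result recorded as Theorem 6.2 of the paper, again only for generalized curves. So invoking invariance of the resolution tree to deduce invariance of $\mu$ assumes something at least as strong as what is known, and stronger than what you are trying to prove. To repair the argument you would need an actual mechanism at the crux --- for instance, an interpretation of $\mu$ through data that a leaf-preserving homeomorphism manifestly preserves (configurations of leaves, their adherence to the singular point, linking/intersection data of leaf boundaries on small spheres), which is the kind of geometric input the proof in [5] supplies and your outline does not.
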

Now we restrict ourselves to $n=2$ and recall some of the features
coming from [5].
A germ of a vector field $X$ with an isolated singularity may be given by
$X = a(x,y) \frac{\partial}{\partial x} + b(x,y)
\frac{\partial}{\partial y} $ where
$a$ and $b$ are holomorphic functions with isolated zero in a
neighborhood $ U \subset \mathbb{C}^{2} $. Denote by
$ \mathcal{F} $ the foliation induced by $X$.
The main tool in the local study is the resolution theorem of Seidenberg
[21] that establishes a canonical reduction.
More precisely, there is a holomorphic map
$\pi : M \rightarrow \mathbb{C}^{2}$ obtained as a composition of a
finite number of blowing ups at points over $\{0\}$, such that at each
singular point $m \in M $ of $\widetilde{\mathcal{F}}$ the foliation with
isolated singularity constructed from $\pi^*(\omega)$ is reduced: there
are coordinate charts $(z,w)$ such that $z(m)=w(m)=0$,
$\widetilde{\mathcal{F}}$ is given locally by the expression
$A(z,w)\frac{\partial}{\partial x} + B(z,w)\frac{\partial}{\partial
y}=0$ and the Jacobian $\dfrac{\partial(A,B)}{\partial(z,w)}(0,0)$
has at least one nonzero eigenvalue. Moreover if $\lambda_1 \neq 0 \neq
\lambda_2$ are eigenvalues of the matrix then ${\lambda_1}/{\lambda_2}
\notin \mathbb{Q}_{+}$. If one of the eigenvalues of the above matrix
of a singularity is zero and another different from zero we call it
saddle-node. By definition a \textit{generalized curve} is a germ of a
vector field $X$ at $ 0 \in (\mathbb{C}^2,0)$ and singular at the
origin such that its desingularization does not admit any saddle-node. In
[5] Camacho, Lins Neto and Sad proved that this property is invariant under
topological equivalences and finally they deduced the following:
\begin{theorem}
The algebraic multiplicity of a generalized curve is a topological invariant.
\end{theorem}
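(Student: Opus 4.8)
The plan is to combine the topological invariance of the Milnor number (theorem (6.1)) with the invariance of the generalized curve property, the bridge between the two being the separatrix of the foliation. So I start from a homeomorphism $\varphi\colon(\mathbb{C}^2,0)\to(\mathbb{C}^2,0)$ realizing a topological equivalence between the generalized curves $\mathcal{F}_X$ and $\mathcal{F}_{X'}$. Since being a generalized curve is preserved under topological equivalence, $X'$ is again a generalized curve, and the goal is to deduce $m_X=m_{X'}$.

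First I would control the separatrices. For a generalized curve the union $B$ of its separatrices is topologically distinguished among the leaves: once no saddle-node occurs in the reduction, the separatrices are finite in number and recognizable from the local topology near $0$, so $\varphi$ must carry $B$ onto the separatrix $B'$ of $\mathcal{F}_{X'}$. Consequently the plane curve germs $(B,0)$ and $(B',0)$ are topologically equivalent. Since the Milnor number of a plane curve germ is a topological invariant (one has $\mu_0(B)=2\delta-r+1$, with $r$ the number of branches and $\delta$ the delta-invariant, all topological), this yields $\mu_0(B)=\mu_0(B')$.

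The heart of the argument is the identity, valid for any generalized curve $\mathcal{F}$ with separatrix $B$,
\[
\mu_0(\mathcal{F})=\mu_0(B)+m_0(\mathcal{F})-1,
\]
where $\mu_0(\mathcal{F})=\dim_{\mathbb{C}}\mathcal{O}_2/\langle a,b\rangle$ for $X=a\,\partial/\partial x+b\,\partial/\partial y$ and $m_0(\mathcal{F})=m_X$. This is exactly where the hypothesis is used: because the desingularization of $\mathcal{F}$ contains no saddle-node, the reduction of singularities of $\mathcal{F}$ coincides with that of the curve $B$, and matching the two blowing-up processes step by step produces the displayed relation. Granting it, both $\mu_0(\mathcal{F})$ (by theorem (6.1)) and $\mu_0(B)$ (by the previous paragraph) are topological invariants, whence $m_X=\mu_0(\mathcal{F}_X)-\mu_0(B)+1=\mu_0(\mathcal{F}_{X'})-\mu_0(B')+1=m_{X'}$, as required. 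Note that $\mu_0(\mathcal{F})$ by itself cannot determine $m_X$, since plane curves of different multiplicity may share a Milnor number; the separatrix is genuinely needed.

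The main obstacle will be establishing this identity together with the coincidence of the reductions of $\mathcal{F}$ and of $B$: this is the substantive content of [5], and it is precisely the absence of saddle-nodes that keeps the two resolution trees equal, a saddle-node being the phenomenon that would break that equality. A secondary difficulty is the topological characterization of the separatrix set, and in particular the dicritical case, where $B$ is not a finite curve; there I would argue instead with the topological invariance of the weighted dual tree of the reduction and read $m_X$ directly off the first exceptional divisor.
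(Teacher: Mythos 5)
There is a genuine error at the heart of your argument. First, for calibration: the paper itself gives no proof of this theorem; it simply quotes it from Camacho--Lins Neto--Sad [5], after noting that [5] proves the generalized-curve property is preserved by topological equivalences. Your skeleton --- carry the separatrix set $B$ to $B'$ by the equivalence, use invariance of the generalized-curve property, then convert curve data into the multiplicity $m_X$ by a numerical identity coming from the coincidence of the reduction of $\mathcal{F}$ with the desingularization of $B$ --- is indeed the structure of the deduction in [5]. But the identity you place at the center,
$$\mu_0(\mathcal{F})=\mu_0(B)+m_0(\mathcal{F})-1,$$
is false. The relations that actually follow from the coincidence of the two reductions (and which are the content of the theorem in [5]) are $\mu_0(\mathcal{F})=\mu_0(B)$ and $m_0(\mathcal{F})=m_0(B)-1$; your formula contradicts the first of these as soon as $m_0(\mathcal{F})\geq 2$. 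Concretely, take the Hamiltonian vector field $X=-3y^2\,\partial/\partial x-4x^3\,\partial/\partial y$ of $f=y^3-x^4$. This is a generalized curve (its reduction is that of the curve $f=0$ and carries a holomorphic first integral, so no saddle-nodes occur), its separatrix is $B=\{f=0\}$, and one computes $\mu_0(\mathcal{F})=\dim_{\mathbb{C}}\mathcal{O}_2/\langle y^2,x^3\rangle=6$, $\mu_0(B)=6$, $m_0(\mathcal{F})=2$; your identity reads $6=6+2-1=7$. Worse, your concluding formula $m_X=\mu_0(\mathcal{F}_X)-\mu_0(B)+1$ would output $1$ for every generalized curve, since the two Milnor numbers agree.

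The repair is to run the argument through the multiplicity relation rather than a Milnor-number relation: since $\varphi$ carries $B$ onto $B'$, the plane curve germs $(B,0)$ and $(B',0)$ are topologically equisingular, so by Burau--Zariski (the $n=2$ case of the conjecture, cited in the paper's introduction) $m_0(B)=m_0(B')$, and then $m_X=m_0(B)-1=m_0(B')-1=m_{X'}$. Note that on this route theorem (6.1) is not needed in the final step at all; in [5] the equality $\mu_0(\mathcal{F})=\mu_0(B)$ together with the topological invariance of $\mu_0(\mathcal{F})$ serves rather to establish that the generalized-curve property itself is topologically invariant, which you assumed as given. Finally, your fallback for the dicritical case (reading $m_X$ off a ``topologically invariant weighted dual tree'') assumes precisely the kind of invariance that is to be proven; the issue is instead resolved at the level of definitions, since the generalized curves of [5] are non-dicritical, so $B$ is a finite union of branches and the argument above applies.
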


Also we may say the same as theorem (3.2) for polynomial foliations.

\section{Appendix}
Let $ \mathcal{A} $ be the set of all holomorphic
functions $f$ such that $0 \in \mathbb{C}^{n}$ is an isolated singularity not
only for $f$ but also for the first nonzero homogeneous polynomial of the
Taylor expansion of the $f$. Actually if the origin is an isolated
singularity of the leading term of $f$ then the same holds for $f$.
\begin{remark} We have the following relation between multiplicity and
Milnor number of $f$ (see page 194 in [3]):
$${\mu}_f={(m_f-1)}^n.$$
\end{remark}
\begin{remark}
If $0 \in \mathbb{C}^n$ is not an isolated singularity of the first
nonzero homogeneous polynomial then $\mu_f > (m_f - 1)^n$.
\end{remark}

The following proposition is true in any dimension. But the following
proof is based on the theorem of L\'e and Ramanujam which is valid for $n
\neq 3$.

\begin{proposition}
The germ at the origin of the hypersurface defined by an
element $f \in \mathcal{A}$ with the algebraic multiplicity $k$ is
topologically equivalent with the germ at the origin of the hypersurfaces
defined by $z_1^k+ \cdots +z_n^k$.
\end{proposition}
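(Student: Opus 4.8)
The plan is to build a one-parameter family of hypersurfaces of constant Milnor number joining $V_f$ to $V_{z_1^k+\cdots+z_n^k}$ and then to apply the Lê--Ramanujam theorem (Theorem (2.4)), which is why the hypothesis $n\neq 3$ is needed. By Theorem (2.5) I may assume $f$ is a polynomial, say $f=f_k+f_{k+1}+\cdots+f_{k+r}$ with $f_k$ its nonzero leading form. I would carry this out in two deformation steps; in each step every member has an isolated singularity at $0$ and Milnor number exactly $(k-1)^n$, and this constancy is precisely what lets me invoke Theorem (2.4) to conclude that the topological type is preserved.

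First I would deform $f$ to its leading homogeneous part $f_k$ through the weighted rescaling
$$ f_t(z)=\frac{1}{t^k}\,f(tz)=f_k(z)+t\,f_{k+1}(z)+\cdots+t^r f_{k+r}(z),\qquad t\in[0,1], $$
which is polynomial in $(z,t)$, hence a $C^\infty$ family, with $f_1=f$ and $f_0=f_k$. For $t\neq 0$ one has $f_t=t^{-k}(f\circ L_t)$ with $L_t(z)=tz$ a linear automorphism, so $f_t$ is right-equivalent to $f$ up to the nonzero constant $t^{-k}$; in particular $f_t$ has an isolated singularity and $\mu(f_t)=\mu(f)$, which equals $(k-1)^n$ by $(6.1)$ since $f\in\mathcal{A}$. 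At $t=0$ the assumption $f\in\mathcal{A}$ says exactly that the homogeneous polynomial $f_k$ has an isolated singularity at $0$; thus $f_k\in\mathcal{A}$ as well and $\mu(f_k)=(k-1)^n$, again by $(6.1)$. Hence the Milnor number is constantly $(k-1)^n$ along the whole family, and Theorem (2.4) gives a germ of homeomorphism carrying $V_f$ onto $V_{f_k}$.

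Second I would connect $f_k$ to $g=z_1^k+\cdots+z_n^k$ inside the space of degree-$k$ forms with an isolated singularity at the origin. A homogeneous polynomial of degree $k$ has an isolated singularity at $0$ exactly when its zero locus in $\mathbb{C}P^{n-1}$ is smooth, i.e. when it avoids the discriminant hypersurface in the finite-dimensional vector space $S_k\cong\mathbb{C}^N$ of degree-$k$ forms. Since the complement of a proper complex-analytic hypersurface in $\mathbb{C}^N$ has real codimension two, it is path-connected, so I may choose a smooth path $h_t$ in $S_k$ with $h_0=f_k$, $h_1=g$, and each $h_t$ having an isolated singularity. Every $h_t$ is then homogeneous of degree $k$ with isolated singularity, so $\mu(h_t)=(k-1)^n$ for all $t$, and a second application of Theorem (2.4) shows $V_{f_k}$ is topologically equivalent to $V_g$. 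Composing the two equivalences yields the asserted V-equivalence.

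The main obstacle I anticipate is the second step: one must know that the locus of degree-$k$ forms with isolated singularity is nonempty and path-connected, which I reduce to the identification of this locus with the complement of the discriminant together with the connectivity of complements of complex hypersurfaces. A secondary point demanding care is checking the $C^\infty$ and constant-Milnor-number hypotheses of Theorem (2.4) across the degenerate value $t=0$ of the first family, where $f_t$ passes from $f$ to the purely homogeneous $f_k$; the rescaling description $f_t=t^{-k}(f\circ L_t)$ is what makes the Milnor number manifestly constant there.
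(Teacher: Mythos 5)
Your proof is correct, and its skeleton is the same as the paper's: a $\mu$-constant deformation from $f$ to its leading form $f_k$, then a $\mu$-constant path of homogeneous degree-$k$ forms with isolated singularity from $f_k$ to $z_1^k+\cdots+z_n^k$, with L\^e--Ramanujam (Theorem (2.4), whence $n\neq 3$) applied to each. Indeed your first family is \emph{literally} the paper's: the paper writes $H_t=f_k+tf_{k+1}+t^2f_{k+2}+\cdots$, which is exactly your rescaling $t^{-k}f(tz)$. The genuine difference is in the second step. The paper takes the explicit linear pencil $F_t=(1-t)(z_1^k+\cdots+z_n^k)+t\alpha P$ and proves, as a separate claim, that some nonzero $\alpha$ makes every $F_t$, $t\in[0,1]$, have an isolated singularity: the critical locus is a bihomogeneous subvariety $V\subset \mathbb{C}P(1)\times\mathbb{C}P(n-1)$, its projection to $\mathbb{C}P(1)$ is Zariski-closed (elimination theory, citing Mumford) and misses $(1:0)$, hence is finite, so a generic scaling $\alpha$ steers the real segment clear of it. You instead invoke path-connectivity of the complement of the discriminant in the full space $S_k$ of degree-$k$ forms. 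Both arguments ultimately rest on the same fact --- the locus of forms with non-isolated singularity is a proper Zariski-closed subset, and complex codimension forces its complement to be connected --- but the paper's version is more self-contained (it needs only the closedness of images under projective projections, applied to a one-dimensional pencil), while yours is shorter and more conceptual at the cost of quoting the identification of the bad locus with the discriminant and the standard connectivity fact, whose rigorous justification (Zariski-closedness of that locus) is again the same elimination-theoretic statement the paper proves by hand. One small point in your favor: you spell out why $\mu$ is constant at the degenerate parameter $t=0$ of the first family (via $f_k\in\mathcal{A}$ and the formula $\mu=(k-1)^n$ of Remark (6.1)), which the paper leaves implicit in the phrase ``defines a $\mu$-constant family.''
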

\begin{proof}By a symbol $f \sim g$ between two germs of holomorphic
functions at the origin
we mean $V_f$ and $V_g$, two germs of hypersurfaces defined by $f$ and $g$
respectively, are topological equivalent.
Let
\begin{eqnarray}
f=f_k + f_{k + 1} + \cdots \nonumber
\end{eqnarray}
be the Taylor expansion of $f$, where $f_i$ is homogeneous polynomial
of degree $i$.
The family $(H_t)_{t \in [0,1]}\in \mathcal{A}$:
\begin{eqnarray}
H_t=f_k + tf_{k + 1} + t^2f_{k + 2} + \cdots \nonumber
\end{eqnarray} defines a $ \mu $-constant family between $f$ and $ f_k
$. So $ f \sim f_k $.

Now our task is to show $P(z) \sim (z^k_1+ \cdots +z^k_n)$ where $P(z)$
is a homogeneous polynomial of degree $k$.

\textbf{Claim:} \textit{There is a non zero complex number $\alpha $
such that 0
is an isolated singularity of $F_t(z):= (1-t)(z^k_1 + \cdots + z^k_n)+
t \alpha
P(z)$ for $t \in [0,1]$.}

\textbf{The proof of claim:} The partial derivatives of $F_t(z)$ form a system
of bihomogeneous polynomials of bidegree $(1,k-1)$:

$$\dfrac{\partial F_t}{\partial z_1}= k(1-t)z^{k-1}_1 + t \alpha
\dfrac{\partial
P}{\partial z_1}$$
$$\vdots$$
$$\dfrac{\partial F_t}{\partial z_n}= k(1-t)z^{k-1}_n + t \alpha
\dfrac{\partial
P}{\partial z_n}$$

and $V:= \mathrm{Zero}(\dfrac{\partial F_t}{\partial z_1}, \cdots
,\dfrac{\partial F_t}{\partial z_n})$ is an algebraic subset of
$\mathbb{C}P(1) \times \mathbb{C}P(n-1) $.
Now consider the projection $\pi : {\mathbb{C}P(1) \times \mathbb{C}P(n-1)}
\rightarrow \mathbb{C}P(1)$. Image of $V$, $\pi(V)$, is a
Zariski-closed subset
of
$\mathbb{C}P(1)$ (see for instance [16] Pg. 33). Since  $F_t(z)$ for
$t=0$ has the
isolated singularity, $(1:0)$ is not in the $\pi(V)$. Therefore $\pi(V)$ is
finite and there are infinitely many lines in the complement of $\pi(V)$ in
$\mathbb{C}P(1)$.
Since $P(z)$ has an isolated singularity at $0 \in \mathbb{C}^n $ we
may choose
lines passing through the origin. This means that there is $\alpha $ such that
the claim is true for every $t \in \mathbb{R}$.

\end{proof}

\section*{References}
\begin{enumerate}
\item{A'Campo, Norbert; La fonction z\^eta d'une monodromie. (French)
Comment. Math. Helv.  50  (1975), 233--248}
\item{A'Campo, Norbert; Le nombre de Lefschetz d'une monodromie.
(French)  Nederl. Akad.    Wetensch. Proc. Ser. A 76 = Indag. Math.  35
 (1973), 113--118.}
\item{ Arnold, V. I.; Gusein-Zade, S. M.; Varchenko, A. N.;
Singularities of differentiable maps. Vol. I. The classification of
critical points, caustics and wave fronts. Monographs in Mathematics,
82. Birkh?user Boston, Inc., Boston, MA, 1985.}
\item{Burau W.: Kennzeichnung der schlauchknoten. Abh. Math. Sem. Ham.
Univ., \textbf{9} (1932), 125-133. }
\item{Camacho, C\'esar; Lins Neto, Alcides; Sad, Paulo; Topological
invariants and equidesingularization for holomorphic vector fields.  J.
Differential Geom.  20  (1984),  no. 1, 143--174.}
\item{Comte, Georges; Milman, Pierre; Trotman, David; On Zariski's
multiplicity problem. Proc. Amer. Math. Soc.  130  (2002),  no. 7,
2045--2048}
\item{Eyral, Christophe; Gasparim, Elizabeth; Multiplicity of complex
hypersurface singularities, Rouch\'e satellites and Zariski's problem.
Preprint, (2005).}
\item{Hironaka, Heisuke; Resolution of singularities of an algebraic
variety over a field of characteristic zero. I, II.  Ann. of Math. (2)
79 (1964), 109--203; ibid. (2)  79  1964 205--326.}
\item{Karras, Ulrich; Equimultiplicity of deformations of constant
Milnor number. Proceedings of the conference on algebraic geometry
(Berlin, 1985),  186--209, Teubner-Texte Math., 92, Teubner, Leipzig,
1986.}
\item{King, Henry C.; Topological type of isolated critical points.
Ann. Math. (2)  107 (1978), no. 2, 385--397.}
\item{King, Henry C.; Topological type in families of germs.  Invent.
Math.  62  (1980/81), no. 1, 1--13}
\item{Lang, Serge; Complex analysis. Fourth edition. Graduate Texts in
Mathematics, 103. Springer-Verlag, New York, 1999.}
\item{L\^e D.T.; Calcul du nombre de cycles \'evanouissaints d'une
hypersurface complexe. Ann. Inst. Fourier (Grenoble) \textbf{23} (1973)
261-270.}
\item{L\^e D.T; Ramanujam C.P.; The invariance of Milnor number implies
the invariance of the topological type, Amer. J. Math. 98 (1976) 67-78.}
\item{Milnor M.; Singular points of complex hypersurfaces. Ann. Math.
Stud. \textbf{61} Princeton.}
\item{Mumford D.; Algebraic Geometry I. Complex Projective Varieties,
2nd ed., Classics in Mathematics, Springer, Berlin, 1995.}
\item{Perron, B.; Conjugaison topologique des germes de fonctions
holomorphes \`a singularit\'e isol\'ee en dimension trois. Invent.
Math. 82 (1985),  no. 1, 27--35.}
\item{Risler, Jean-Jacques; Trotman, David; Bi-Lipschitz invariance of
the multiplicity. Bull. London Math. Soc.  29  (1997),  no. 2,
200--204.}
\item{Samuel P.; Alg\'ebricit\'e  de certains points singuliers
alg\'ebroides. J. Math. Pures Appl. \textbf{35} (1965) 1-6.}
\item{Saeki, Osamu; Topological types of complex isolated hypersurface
singularities. Kodai Math. J.  12  (1989),  no. 1, 23--29.}
\item{Seidenberg, A.; Reduction of singularities of the differential
equation $ Ady=Bdx $.  Amer. J. Math.  90  1968 248--269.}
\item{Zariski O.; Some open questions in the theory of
equisingularities. Bull.
Amer. Math. Soc. \textbf{77} (1971) 481-491.}
\item{Zariski O.; Collected papars, Vol IV, MIT Press, Cambridge,
Mass., 1979.}
\end{enumerate}

\end{document}